\newcommand{\R}{\mathds{R}}
\newcommand{\Z}{\mathds{Z}}
\newcommand{\tdiff}[1]{\frac{\mathsf{d}}{\mathsf{d} #1}}
\renewcommand{\d}{\;\mathsf{d}}
\newcommand{\D}{\ensuremath{\mathrm{D}}}
\newcommand{\body}{\Omega}
\newcommand{\workdom}{D}
\newcommand{\intga}{\int_{\Gamma_N}\,}
\newcommand{\intbo}{\int_{\workdom}\,}
\newcommand{\normal}{\nu}
\newcommand{\strain}{u}
\newcommand{\lstrain}{u^\mathrm{L}}
\newcommand{\sstrain}{u^\mathrm{S}}
\newcommand{\sstrainh}{u^\mathrm{S}_h}
\newcommand{\lstrainh}{u^\mathrm{L}_h}
\newcommand{\lstrainhk}{u^\mathrm{L}_{h,k}}
\newcommand{\tlstrainhk}{\tilde u^\mathrm{L}_{h,k}}
\newcommand{\stress}{\sigma}
\newcommand{\eps}[1]{\varepsilon[#1]}
\newcommand{\etensor}{C}
\newcommand{\efftensor}{\etensor^\ast}
\newcommand{\ltensor}{\etensor^\mathrm{L}}
\newcommand{\ltensorh}{C^\mathrm{L}_h}
\newcommand{\ltensorhk}{C^\mathrm{L}_{h,k}}
\newcommand{\tltensorhk}{\tilde C^\mathrm{L}_{h,k}}
\newcommand{\stensor}{\etensor^\mathrm{S}}
\newcommand{\stensorh}{\etensor^\mathrm{S}_h}
\newcommand{\Vh}{\mathcal{V}_h}
\newcommand{\Lag}{\mathcal{L}}
\newcommand{\ie}{i.\,e.}
\newcommand{\eg}{e.\,g.}
\newcommand{\wrt}{w.\,r.\,t.}
\newcommand{\etal}{et\,al.\ }
\newcommand{\notinclude}[1]{}
\newcommand{\fr}[2]{{\textstyle\frac{#1}{#2}}}
\renewcommand{\div}{\mathrm{div}}
\newtheorem{theorem}{Theorem}[section]
\theoremstyle{definition}
\begin{document}

\title{A posteriori modeling error estimates in the optimization of two-scale elastic composite materials}

\author{
       Sergio Conti\footnotemark[1]
  \and Benedict Geihe\footnotemark[2]
  \and Martin Lenz\footnotemark[2]
  \and Martin Rumpf\footnotemark[2]
}

\renewcommand{\thefootnote}{\fnsymbol{footnote}}
\footnotetext[1]{Institute for Applied Mathematics, Rheinische Friedrich-Wilhelms-Universit\"at Bonn, Endenicher Allee 60, 53115 Bonn, Germany}
\footnotetext[2]{Institute for Numerical Simulation, Rheinische Friedrich-Wilhelms-Universit\"at Bonn, Endenicher Allee 60, 53115 Bonn, Germany}
\renewcommand{\thefootnote}{\arabic{footnote}}

\maketitle

\begin{abstract}
The a posteriori analysis of the discretization error and the modeling error is studied
for a compliance cost functional in the context of the optimization of composite elastic materials
and a two-scale linearized elasticity model.
A mechanically simple,  parametrized microscopic supporting structure is chosen
and the parameters describing the structure are determined minimizing the compliance objective.
An a posteriori error estimate is derived which includes the
modeling error caused by the replacement of a nested laminate microstructure by this considerably simpler microstructure.
Indeed, nested laminates  are known to realize the minimal compliance  and provide a benchmark for the quality of the microstructures.
To estimate the local difference in the compliance functional the dual weighted residual approach is used.
Different numerical experiments show that the resulting adaptive scheme leads to simple parametrized microscopic supporting structures
that can compete with the optimal nested laminate construction.
The derived a posteriori error indicators allow to verify that
the suggested simplified microstructures achieve the optimal value of the compliance up to a few percent.
Furthermore, it is shown how discretization error and modeling error can be balanced
by choosing an optimal level of grid refinement.
Our two scale results with a single scale microstructure can provide guidance towards
the design of a producible macroscopic fine scale pattern.
\end{abstract}

\section{Introduction}\label{sec:intro}
It is a fundamental insight in shape optimization that elastic bodies subjected to external surface loadings exhibit
the spontaneous development of microstructures, for example in the case of compliance optimization \cite{Al02}. This phenomenon is a
practical manifestation of the fact that the associated minimization problem is in general ill-posed.
One popular solution appeals to relaxation theory, thereby allowing composite materials
with intermediate density and effective elasticity tensors resulting from a microscopic mixture of the involved constituents.  Homogenization theory \cite{DoCi99,Mi02}
is then used to derive the effective material properties of a composite elastic material, starting from
a given microscopic decomposition into homogeneous regions of the individual ingredient materials.
Using the G-closure theory \cite{Cherkaev2000,Mi02}  it can be shown that a  nested lamination structure realizes the minimal value of the compliance functional \cite{Al02}.
Let us note that optimal constructions exist but are not unique. In fact, several entirely different
microstructures lead to globally optimal material properties at the macroscopic level \cite{Cherkaev2000,Mi02}.
Most of them are however purely theoretical, lack a truly mechanical
equivalent and would not be manufacturable.
We address this optimal design problem for 2D elastic material composites
numerically and introduce a microscopic supporting structure consisting of two orthogonal
trusses with varying thickness and arbitrary rotation (cf. Figure~\ref{fig:microsketch}).
For these simple microscopic cell geometries a collocation type boundary element method is employed to compute the corrector problem in a two-scale ansatz and to
evaluate the local effective elasticity tensor. This is then combined with a bi-quadratic
finite element scheme on a rectangular grid on the macroscale to set up a two-scale approach
for the simulation of microstructured composite elastic materials.
We shall in particular focus on a microscopic construction with rotated, orthogonal
trusses with varying thickness, and show that it leads to material composites with a compliance cost
remarkably close to the optimal composite obtained from sequential lamination.
Numerical experiments with different simple, parametrized geometries in the microscopic cells
were performed in \cite{CoGeRu14}, and have lead to the identification of the rotated truss construction as the most promising candidate.
This microscopic pattern has already been extensively studied in the literature (see the discussion below).
Based on this two-scale approach for the elastic state equation one can optimize the parameters controlling the microscopic pattern,
\ie, the two width parameters of the trusses and the
rotation angle, considered as functions of the macroscale position. Numerical results suggest that
the parameter functions controlling the microscopic patterns are smooth
in large regions of the elastic workpiece. At the same time,   in small regions
the simple model for the microscopic supporting structure tends
to show more complex patterns.

\emph{Related work.}
Shape optimization in general is a well-established field and covered extensively in the literature, see \eg\ the textbooks
\cite{Be95a,Al02}. Regarding the theory of homogenization we refer to \cite{Jikov1994,BrDe98,DoCi99,BuDa91,Cherkaev2000,Mi02}. The foundations
for the sequential laminates construction were already laid early in \cite{Ta85,MuTa85,FrMu86,LuCh86,GiCh87,Av87}.
Jog \etal\ \cite{JogHaberBendsoe1994}
and Allaire \etal\ \cite{AlBoFr97} used the nested lamination ansatz  to develop a  practical numerical scheme for the optimization of the material composition in mechanical work pieces.
Thus, these methods approximate the truly optimal material microstructure.

Elastic shape optimization with geometrically simple microscopic structures
has first been investigated by Bends{\o}e and Kikuchi \cite{BeKi88}.
They in particular already considered rotated cells with a rectangular hole (see \cite[Section 3]{BeKi88}) and applied adaptive meshes to
resolve the macroscopic variation of the microstructure parameters.
Optimal microstructures based on these rotated truss geometries are also studied by Kikuchi and Suzuki \cite{SuKi91} for instance for the cantilever design problem.
The same quasi periodic microscopic pattern was applied by Rodrigues and Fernandes \cite{RoFe94} for thermoelastic material optimization also using an adaptive finite element method.
Here, we pick up this approach and provide a posteriori estimates for the discretization error and the modeling error.
Surely, due to the explicit relation between the lamination parameters and the effective elasticity tensor, algorithms based on the
nested lamination approach are very efficient and need a numerical discretization solely on the macroscale.
The intention of this paper is to study in particular the modeling error that arises when replacing the nested laminate with its $3$ different scales in 2D by the two-scale model with the
rotated trusses on the microscale. Surely, this comes at the expense of the solution of the microscopic PDE problem.
In fact, we demonstrate  exemplarily that the a posteriori error estimation of modeling errors of this type is accessible via the dual weighted residual approach.
To this end, we combine a finite element scheme on the macroscale as for instance in \cite{BeKi88,SuKi91} with a boundary element scheme on the microscale.
A priori error estimates for a finite element scheme in two-scale PDE-constrained optimization were very recently presented by Li \etal\ \cite{LiCa15},
were the PDE is a diffusion equation in a domain with periodic microstructure.

Further optimal microstructures include concentric spheres for hydrostatic loads \cite{Ha62}, confocal ellipsoids
\cite{GrKo95}, and the Vigdergauz construction \cite{Vi94}. We refer to
\cite{Cherkaev2000,Mi02} for a more complete discussion.
Related to our two-scale approach with parametrized microstructures \cite{CoGeRu14} is also the earlier work by Barbarosie and Toader.
Based on optimization of holes in generalized periodic domains for given macroscopic affine displacements \cite{BaTo10,BaTo10a}
they set up a macroscopic finite element scheme in \cite{BaTo12} working with effective material properties evaluated via
numerical homogenization.
All these methods are in line with the heterogeneous multi-scale method (HMM) \cite{EEnHu03,EEn03,EEn05,EMiZh05}, a general paradigm
to tackle numerical problems involving different length scales.

A different approach to the optimization of microscopic geometries is the  free material optimization approach,
where one first aims at the locally optimal elasticity tensor. Bends{\o}e \etal\ showed in \cite{BeGu94} that one is lead to orthotropic materials with
directions of orthotropy aligned with the directions of principal strains. Furthermore, they computed numerical optimal material properties.
Haslinger \etal\ used this ansatz to numerically compute a distribution of optimal elasticity tensors in 3D \cite{HaKoLe10}.
In this case one first optimizes the coefficients of the effective elasticity tensor, and then in a post-processing step
one searches for approximating microscopic realizations.
Our approach is also related to the post processing method by Pantz and Trabelsi \cite{PaTr08}
who used a local truss construction which
is  deformed based on the optimal lamination directions (cf. \cite[Section 4.1]{PaTr08}).
Therefore, as in our approach the nested microstructure construction is avoided. Indeed, their structures do not have
any inconsistency at macroscopic cell boundary and
can in principle be manufactured.

A posteriori error estimates for elliptic homogenization problems with fine scale diffusion were derived
in the context of HMM in \cite{Oh05,HeOh09}.
In contrast to classical discretization error estimates it is however often required to assess the error \wrt\ a certain cost
functional. Such goal-oriented error estimates for quantities of interest of a real composite work piece were derived in
\cite{PrOd99,OdVe00,OdVe00a,Ve04}.
In this contribution we follow the dual weighted residual (DWR) method \cite{BeRa97}, see \eg\ \cite{BeKaRa00} in particular for optimal control problems.
This approach has recently been put to work for a variety of different applications.
See \cite{BeEsTr11} for a quasi-optimality result of the adaptive finite element scheme using a special marking strategy,
\cite{BeVe09,VeWo08,LeMeVe13} for the treatment of control and state constraints,
and \cite{KoLe13} for matrix valued $L^1$ optimal control.
A posteriori error estimates in the context of shape optimization have been studied in  \cite{KiVe13} for
a tracking type cost functional and Helmholtz state equation,
in \cite{MoNoPa10,MoNoPa12}, where DWR is used to assess the PDE error while the geometric error estimates relate to the
Laplace Beltrami operator,
and in \cite{KaDeGa14, Ka13} for one shot methods applied to fuel ignition problems and aerodynamic shape optimization.
In \cite{BrLiUl12} error estimation for the optimal design in the context of Navier Stokes flow is discussed and in \cite{Wo10} estimates for the variable thickness sheet model are derived.
In \cite{GeRu15} a posteriori error estimates have been derived for shape optimization in a two-scale context with nested laminates on the microscale.
To this end, the dual weighted residual approach has been applied leading to an associated adaptive meshing strategy.

\medskip
The paper is organized as follows. In Section~\ref{sec:basics} we will briefly address the fundamental concepts of linearized elasticity, shape optimization \wrt\
the compliance objective and optimal material composites attained by the sequential lamination construction. Furthermore, we briefly review the numerical two-scale model in linearized elasticity.
The dual weighted residual approach for the compliance cost is developed in Section~\ref{sec:dwr}. In the resulting estimates, as usual, certain weights still involve the continuous solution of the state equation.
A suitable numerical approximation which enables
to derive effective local error indicators is presented in Section~\ref{sec:weighting}. Some comments on the implementation are given in Section~\ref{sec:impl}
and in Section~\ref{sec:results} we present our numerical results.

\section{Optimization of material composites}\label{sec:basics}
In this section we will briefly revise the fundamental concepts of linearized elasticity, shape optimization with the compliance objective and optimal microstructures
given by sequential lamination. Furthermore, we will introduce a two-scale approach for the elastic behavior of microstructured materials,
where the  microstructure is given by a parametrized truss construction.

\emph{Linearized elasticity.}
Let us assume that an elastic workpiece is described by a simply connected domain $\workdom \subset \R^2$ with Lipschitz boundary $\partial \workdom$.
Suppose that the boundary is split into a fixed relatively open subset $\Gamma_D$, a Dirichlet part where the displacement vanishes,
and $\Gamma_N := \partial D \setminus \Gamma_D$, a Neumann part where sufficiently regular surface loads $g$ can be applied.
Then the induced displacement $\strain[C]$  is  the unique solution $u: \workdom \rightarrow \R^2$ of the partial differential equations
of linearized elasticity, given in variational form as
\begin{equation}\label{eq:weak}
 a(\etensor;\strain,\varphi) = l(\varphi) \;\;\forall\,\varphi \in H^1_{\Gamma_D}
\end{equation}
with the quadratic form $a(\etensor;\strain,\varphi) := \intbo \etensor(x) \, \eps{\strain} : \eps{\varphi} \d x$ and the linear form
$l(\varphi) := \intga g \cdot \varphi \d a(x) \,$.
Here $H^1_{\Gamma_D}$ denotes the Sobolev space of $L^2$ vector-valued functions with weak derivatives in $L^2(\workdom)$
and vanishing trace on $\Gamma_D$, $\eps{\strain} := \frac12 \left( \D\strain + \D\strain^\top \right)$ denotes the symmetrized strain tensor
with the Jacobian $\D \strain$, $\normal$ the outward pointing normal on $\Gamma_N$ and $\etensor \in L^\infty(\workdom,\R^{2^4})$ the elasticity
tensor. Furthermore, $(\etensor \epsilon)_{ij}  = \sum_{kl} \etensor_{ijkl} \epsilon_{kl}$ and $\sigma:\epsilon = \sum_{ij} \sigma_{ij}\epsilon_{ij}$ for two matrices $\sigma,\, \epsilon \in \R^{2\times2}$.
The fourth order tensor $\etensor(x)$ characterizes the material properties at each point $x \in \workdom$ and will later be given by the effective
tensor arising either from the explicit homogenization formula for laminate microstructures or from the solution of
the microscopic cell problem. As usual, we assume that $\etensor$ fulfills the symmetry properties
$\etensor_{ijkl} = \etensor_{jikl} = \etensor_{ijlk} = \etensor_{klij}$
and the ellipticity condition
$\etensor \, \xi : \xi \geq c \, |\xi+ \xi^\top|^2$.

\emph{Shape optimization.}
We consider a cost functional $J[\etensor,\strain]$ which we aim at minimizing under the constraint that $\strain$ solves the associated elastic problem \eqref{eq:weak}.
In our numerical application, we adopt the classical compliance optimization approach to shape optimization. This means that we
 optimize the rigidity of an object by minimizing the elastic energy, or compliance, given by the functional
\begin{equation} \label{eq:compliance}
 J[\etensor,\strain] := \intga g \cdot \strain\d a(x) = l(\strain ) \,,
\end{equation}
which does not depend explicitly on $\etensor$. However,
from (\ref{eq:weak}) one obtains $J[\etensor,\strain[\etensor] ]=a(\etensor;\strain[\etensor] ,\strain[\etensor] ) $.
The shape optimization problem now amounts to finding a subset $\body \subset \workdom$ where a hard material with
elasticity tensor $A$ should be placed.
The remaining part $\workdom \setminus \body$ is either left void or filled with a weak material with
elasticity tensor $B$,
so that the actual elasticity tensor is given by $\etensor = \chi_\body A + (1-\chi_\body) B$, where
$\chi \in L^\infty(\workdom,\lbrace 0,1 \rbrace)$
 is the characteristic function of $\body$. Throughout this paper we only consider isotropic materials that are described by the two
Lam\'e parameters $\lambda$ and $\mu$.
If the amount of hard material is constrained, \ie\ $ \int_\workdom \chi_{\Omega} \d x = \Theta$ with $\Theta > 0$ fixed, the resulting minimization problem
is ill-posed and the formation of microstructures can be observed in numerically computed minimizing sequences.

\emph{Sequential lamination.}
To recover well-posedness the minimization problem can be relaxed by allowing material of intermediate density at each point, \ie\ $\chi \in [0,1]$.
The theory of homogenization then permits to compute the effective material properties $\efftensor$ of mixtures of the constituents $A$ and $B$ on
different underlying length scales. We will  refer to the literature for details \cite{Braides1998,Al02,DoCi99,Mi02}.\\
Most important for our exposition here is the fact that a sequential lamination construction yields effective materials that attain lower bounds
on the local elastic energy density, called Hashin-Shtrikman bounds, and thus represent optimal microstructures. They are obtained by layering hard
and soft material along a certain direction with a certain ratio, computing homogenized elastic properties and using those to iterate the
construction on a subsequent, substantially larger length scale, see the sketch in Figure~\ref{fig:microsketch}. For the compliance objective in two
space dimensions two iterations of the lamination construction are indeed sufficient and the parameters of the construction (direction of the lamination, volume fraction of each phase, and overall local density)
can explicitly be computed from the local stress
$\sigma(x) = \efftensor(x) \eps{\strain(x)}$. At the same time the effective material properties can likewise be computed explicitly from the
parameters, leading to an alternating algorithm for computing globally optimal relaxed shapes.
Such a numerical method for the two-scale optimization with nested laminates and an alternating descent algorithm was presented by Jog \etal\ \cite{JogHaberBendsoe1994}.
It is moreover possible to ultimately pass from the
weak material $B$ to void. For further details we refer to \cite{KoLi88,AlKo93a,AlKo93,AlBoFr97}.

\emph{A two-scale approach for approximating microstructures.}
In this paper, we aim at the numerical computation of a near optimal material composite which consists of a simple microstructure, which is at least locally
mechanically constructible. For this microstructure we choose a model which has already been proposed in the $80$s  \cite{BeKi88,SuKi91}
based on a microscopic pattern of two rotated, orthogonal trusses of different width, see Figure~\ref{fig:microsketch} and the discussion above.
To this end, assume that the microstructure of the two orthogonal, rotated trusses is parametrized via the (relative) width $\delta_1$ and $\delta_2$ of the two trusses ($0<\delta_1,\,\delta_2 <1$)
and the rotation angle $\alpha$. The vector of parameters is denoted by $q = (\alpha, \delta_1, \delta_2)$ and depends on the macroscopic position $x$, $q(x):= (\alpha(x), \delta_1(x), \delta_2(x))$.
We then define the microscopic pattern on the (periodically extended) fundamental cell $\omega[q(x)] := Q(\alpha(x))(-\tfrac12,\tfrac12)^2$ of a periodic lattice at the position $x$, with $Q(\alpha)$ denoting the rotation by the angle $\alpha$.
In fact, the cell $\omega[q(x)]$  splits  into
a domain $\omega_A[q(x)]$ with hard material described by the elasticity tensor $A$ and a remaining domain
$\omega_B[q(x)] := \omega[q(x)] \setminus \omega_A[q(x)]$ with soft material described by the elasticity tensor $B$.
The hard phase is given by
\begin{equation*}
 \omega_A[q(x)] := Q(\alpha(x)) \left( [-\tfrac{\delta_1(x)}{2},\tfrac{\delta_1(x)}{2}] \times [-\tfrac{1}{2},\tfrac{1}{2}] \cup [-\tfrac{1}{2},\tfrac{1}{2}] \times [-\tfrac{\delta_2(x)}{2},\tfrac{\delta_2(x)}{2}]\right)
\,.
\end{equation*}
Let us remark that in the implementation of the boundary element method used to solve the microscopic correction problem it is advantageous to consider a shifted fundamental cell $\tilde \omega[q(x)]$
resulting from a shift on the periodic lattice by $\tfrac12 Q(\alpha(x)) (1,1)$, with the
splitting $\tilde \omega_B[q(x)] = Q(\alpha(x)) \left( [\tfrac{\delta_1(x)}{2},1-\tfrac{\delta_1(x)}{2}] \times  [\tfrac{\delta_2(x)}{2},1-\tfrac{\delta_2(x)}{2}]\right)$ and
$\tilde \omega_A[q(x)] = \tilde \omega[q(x)] \setminus \tilde \omega_B[q(x)]$ which describe the identical periodic, microscopic pattern (cf. dotted red line in Figure~\ref{fig:microsketch}) with a single interface.
\begin{figure}[t]
\hfill
\includegraphics[height=.2\linewidth]{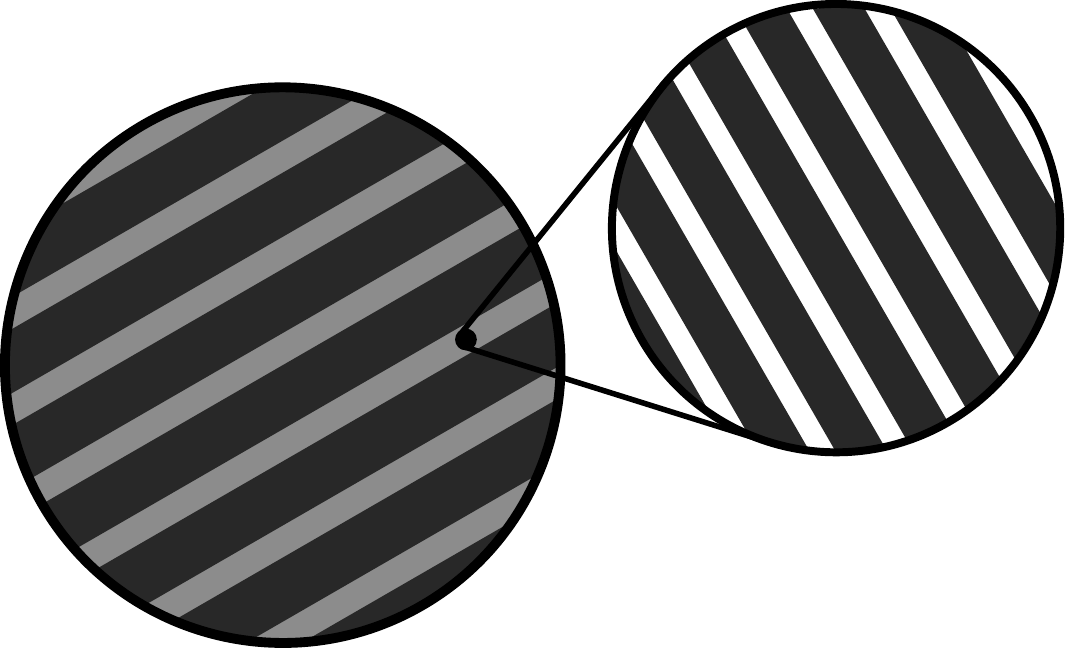}
\hfill
\includegraphics[height=.2\linewidth]{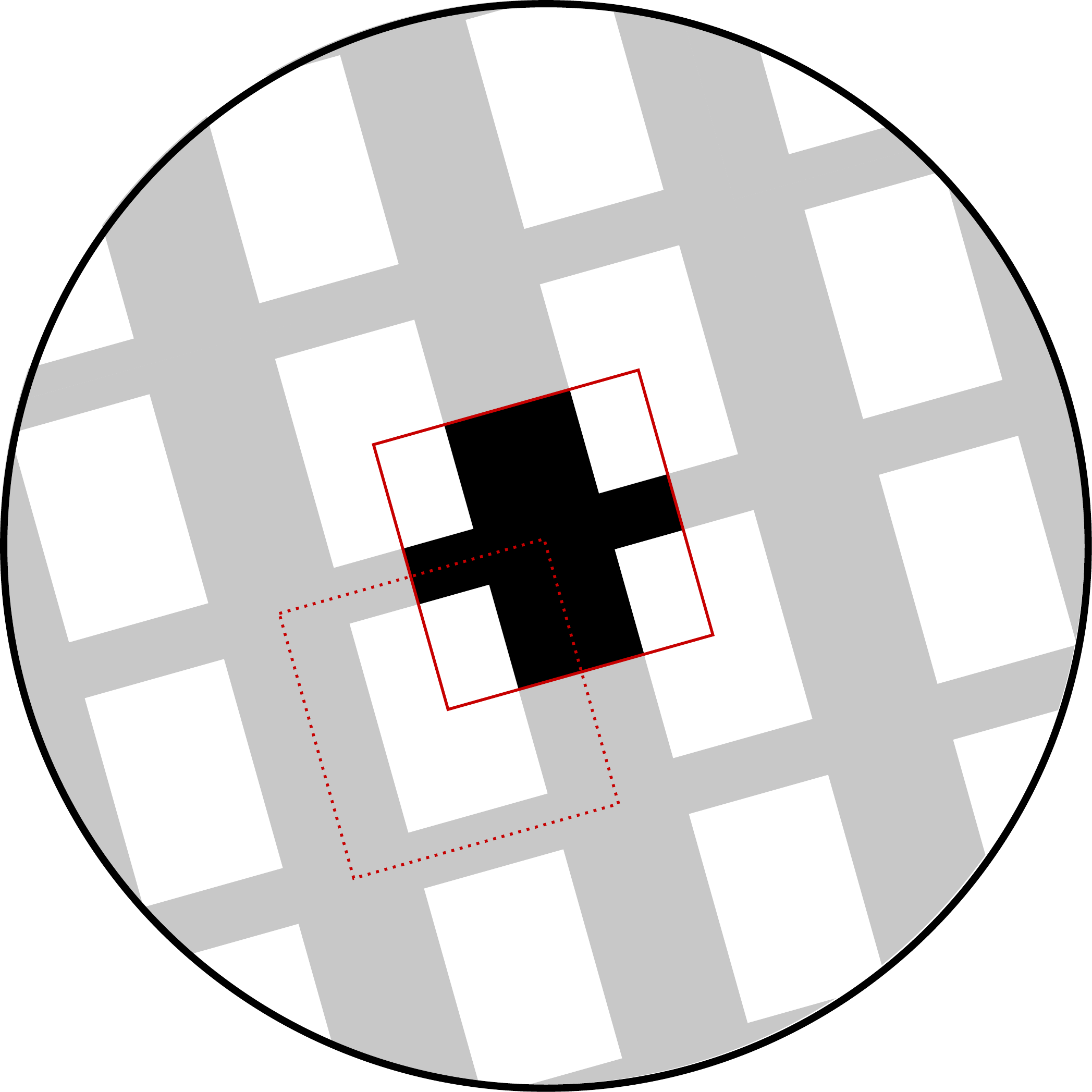}
\hfill\mbox{}
\caption{Sketch of twofold sequential lamination.
Freely rotated cells with rectangular holes (see dotted red marking) representing orthogonal trusses (see red marking).}
\label{fig:microsketch}
\end{figure}
The microscopic problem can be separated from the macroscopic one using the theory of homogenization
\cite{Braides1998,DoCi99,Al02}. The key modeling assumption here is that the truss microstructure is locally periodic
and much finer than the macroscopic degrees of freedom.
Given the
isotropic constituents $A$ and $B$ and admissible locally periodic perforations on the microscale,
which at a material point $x$ are represented by the parameters $q(x)$, one obtains the local effective elasticity
tensor $\efftensor(x)$ applied to any strain  $\xi\in\R^{2\times 2}_{sym}$ by the cell problem
(see \cite[Section 10.2]{DoCi99} or \cite[Theorem 14.7]{Braides1998})
\begin{equation}\label{eq:effectivevariational}
\efftensor[q](x) \, \xi:\xi = \int_{\omega[\alpha(x)]} C(x,y) \, \eps{\xi y + w} : \eps{\xi y + w} \d y
\end{equation}
with the $\omega[\alpha(x)]$-periodic corrector $w$ defined as the (up to constant translations unique) solution of
\begin{equation*}
\int_{\omega[\alpha(x)]}  \etensor(x,y) \left( \xi + \eps{w(x,y)} \right) : \eps{\psi(x,y)} \d y  = 0 \quad \forall \, \psi \in H^1_{per}(\omega[\alpha(x)];\R^2)
\end{equation*}
in the space $H^1_{per}(\omega[\alpha(x)];\R^2)$ of vector valued functions in $H^1$ on $\omega[\alpha(x)]$ with periodic boundary conditions.
Here, $\etensor(x,y) = A$ for $y \in \omega_A[q(x)]$  and $\etensor(x,y) = B$ for $y \in \omega_B[q(x)]$, and by uniqueness $\eps{w}$ depends linearly on $\xi$.
The tensor $\efftensor[q](x)$ describes the effective elastic properties of a material with an infinitesimally fine microstructure described by the three parameters $q(x)=(\alpha(x), \delta_1(x), \delta_2(x))$.
The macroscopic (homogenized) problem than takes the same form as in \eqref{eq:weak} with the local elasticity tensor  $C(x)$ replaced by the effective elasticity tensor $\efftensor[q](x)$,
\begin{equation*}
 a(\efftensor[q];\strain,\varphi) = l(\varphi) \;\;\forall\,\varphi \in H^1_{\Gamma_D}(D;\R^2) .
\end{equation*}
Practically,  based on \eqref{eq:effectivevariational} and the symmetry assumption for the effective elasticity tensor we can obtain the components of $\efftensor$ by
\begin{equation}  \label{eq:Cstar}
\efftensor_{ijkl} = \efftensor \varepsilon_{ij} : \varepsilon_{kl}
=  \efftensor \varepsilon_{ij+kl} : \varepsilon_{ij+kl} - \efftensor \varepsilon_{ij-kl} : \varepsilon_{ij-kl}
\end{equation}
with $\varepsilon_{ij} = \frac12 (e_i \otimes e_j + e_j \otimes e_i)$ and  $\varepsilon_{ij \pm kl} = \frac12 (\varepsilon_{ij} \pm \varepsilon_{kl})$
where $e_1 = (1,0)$ and $e_2 = (0,1)$.

Equivalently, we can formulate a single two-scale problem, which contains both the microscopic and the macroscopic degrees of freedom:
For given isotropic constituents $A$ and $B$ and admissible locally periodic perforations on the microscale parametrized by $q$,
find an effective macroscopic displacement $u^\star \in H^{1,2}_{\Gamma_D}$ and a periodic microscopic correction $w^\star \in \mathbf{W}_\alpha$
solving the two-scale equation (see for example \cite[Section 9.3]{DoCi99} for a derivation in a scalar situation)
\begin{equation*}
  \int_{\workdom} \int_{\omega[\alpha(x)]} \etensor(x,y)  (\eps{u^\star(x)} + \eps{w^\star(x,y)}) : (\eps{\phi(x)} + \eps{\psi(x,y)}) \d y \d x
= \int_{\Gamma_N} g(x) \cdot \phi(x) \d a(x)
\end{equation*}
for all $\phi \in H^{1,2}_{\Gamma_D}$ and all functions $\psi \in \mathbf{W}_\alpha$,
where the function space of microscopic periodic displacement corrections is defined as
\begin{align*}
\mathbf{W}_\alpha := \Big\lbrace &\psi: (x,y) \mapsto \psi(x,y) \in \R^2  \text{ measurable, where }\\
                                 &x\in \workdom,  \psi(x,y+z) = \psi(x,y) \, \forall z \in Q(\alpha(x)) \Z^2,
                                  \Vert \psi \Vert_{\mathbf{W}_\alpha} \leq \infty \Big\rbrace
\end{align*}
with $\|\psi\|_{\mathbf{W}_\alpha} := \left( \int_{\workdom}  \int_{\omega[\alpha(x)]} \psi(x,y)^2 + |\D_y \psi(x,y)|^2 \d y \d x  \right)^{\frac12}$.
As above, $\etensor(x,y) = A$ for $y \in \omega_A[q(x)]$  and $\etensor(x,y) = B$ for $y \in \omega_B[q(x)]$.
For the underlying two-scale methodology we refer to \cite{EMiZh05}.

\emph{Macroscopic and microscopic discretization.}
To discretize the problem we assume the macroscopic domain $D$ to be polygonally bounded and equipped with an admissible and regular finite element mesh $\mathcal{M}_h$, cf. \cite{Ciarlet1978},
with elements $E \in \mathcal{M}_h$ and a piecewise constant mesh size function $h$. In our implementation, we assume that $D$ can be meshed with  a rectangular mesh and
use a finite element ansatz for the discrete elastic displacement $\strain_h$ on the macroscale in the finite element space $\Vh$ of piecewise bi-quadratic and continuous vector-valued functions with vanishing trace on $\Gamma_D$.
As for the elastic material properties we consider a piecewise constant tensor field $\efftensor_h$, derived either from
a set of parameters describing sequential lamination or from microscopic perforation in the two-scale model.
We then compute the solution $\strain_h$ of the discrete weak problem $a(\efftensor_h;\strain_h,\varphi_h) = l(\varphi_h)$
for all $\varphi_h \in \Vh$.

In case the effective material properties $\efftensor_h$ are not given explicitly, as in our case of the rotated trusses, they have to be computed numerically by solving the cell problem \eqref{eq:effectivevariational}.
To this end, we employ a boundary element method for which only the boundaries of the perforated unit cell $\omega_A[q(x)]$ need to be discretized by polygon arcs.
Let us mention here that this microscopic mesh is discretized uniformly and will not undergo any refinement.
The boundary element method requires to deal with the fundamental solution of the linearized elasticity PDE. Here we choose as the underlying material model the Lam\'e Navier model for an isotropic material, see \eg\ \cite{St03},
\[
 u^\ast_{ki}(p,q) = \frac{\lambda + \mu}{4 \pi \left( \mu (\lambda +2 \mu ) \right) }
 \left( -\delta_{ki} \frac{\lambda+3\mu}{\lambda+\mu} \log\Vert p-q \Vert +
 \frac{(q_k-p_k)(q_i-p_i)}{\Vert p-q \Vert^2} \right),
\]
where $\lambda$ and $\mu$ are the Lam\'e parameters that will both be set to $1$ for our numerical computations.
The fundamental solution is used to rewrite the elasticity equation as a boundary integral equation leading to
\begin{equation}\label{eq:bem:BIE}
 w = U[A \eps{w} \cdot \normal] - V[w]\,,
\end{equation}
where the boundary integral operators $U$ and $V$ are the single and double layer operator, respectively.
On the discretized boundary a set of collocation points $\xi_i$ is fixed and the displacement $w$ as well as
the normal tension $A \eps{w} \cdot \normal$ is approximated by linear interpolation of nodal values at $\xi_i$.
Equation \eqref{eq:bem:BIE} now has to hold for every $\xi_i$ leading to a linear system of equations.
The boundary integral operators $U$ and $V$ are only applied to piecewise affine  functions on the boundary, therefore their application to the basis functions has been computed analytically.
As we typically consider mixed boundary value problems equation \eqref{eq:bem:BIE} has to be rearranged according to known
and unknown values. Further details can be found in \cite{AtCoGe12}.

The coefficients of the effective material tensor can then be evaluated as in \eqref{eq:effectivevariational} using a
boundary integral formulation. Likewise it is possible to derive a shape gradient, cf. \cite{DeZo01}, to be able to compute sensitivities
$\nabla_{q(x)} \efftensor_{ijkl}$ of the elastic coefficients \wrt\ the describing microscopic parameters.
It is then used in the optimization scheme for the minimization of the macroscopic cost functional \eqref{eq:compliance}.
Let use mention that deriving the cost functional initially also leads to sensitivities of the elastic solution \wrt\ $q(x)$.
We use as usual the associated Lagrangian approach and derive the adjoint equation to compute the gradient of the cost functional.
For further details we refer to the earlier work \cite{CoGeRu14}.
In Section~\ref{sec:impl} we will give further details on the implementation.

\section{Modeling and discretization estimate} \label{sec:dwr}
In what follows we will derive an estimate for the difference of the achievable compliance cost using the numerical two-scale model with a macroscopically
parametrized microscopic pattern of two rotated, orthogonal trusses and the optimal compliance cost  associated with the nested laminate construction.
This estimate reflects both
the modeling error caused by the choice of the mechanically simple but non optimal microscopic pattern and the numerical discretization error.
Let us emphasize that we do not expect that the resulting difference of compliance costs vanishes for the mesh size tending to zero.
In fact, we are interested in the remaining global modeling error and its associated spatial distribution.  Furthermore, we will use the
resulting error estimate to adapt the macroscopic finite element mesh.

Let $\lstrain$ be the solution of the continuous problem \eqref{eq:weak} involving the optimal elasticity tensor field $\ltensor$
resulting from an optimal sequentially laminated microstructure at each point. Likewise let $\sstrain_h$ be the discrete macroscopic solution
when using the two-scale model with piecewise constant effective tensors $\stensor_h$ obtained by solving the cell problems \eqref{eq:effectivevariational}
for a pattern of two rotated, orthogonal trusses.
Ultimately we are interested in an estimate of the corresponding difference of compliance cost values
\begin{equation*}
 \left| J[\ltensor;\lstrain] - J[\stensor_h;\sstrain_h] \right|,
\end{equation*}
in particular for $\stensor_h$ being the field of elasticity tensors resulting for the optimal choice of the microstructure parameters and for $\sstrain_h$ computed as the resulting
 discrete macroscopic strain in the finite element space $\Vh$.
To derive such an estimate  we employ the dual weighted residual approach \cite{BeKaRa00} for optimal control problems. Here, the controls---which have to reside in the same space---are
the coefficient functions of the elasticity tensors $\ltensor$, $\stensor_h$. Using the usual notation $\|\cdot\|_{m,p,A}$ for the $W^{{m,p}}$ Sobolev norm on a set $A$
we obtain the following theorem.
\begin{theorem}[Weighted a posteriori modeling error estimate] \label{theorem:estimates}
Given the continuous solution $\lstrain$ to \eqref{eq:weak} for an optimal effective elasticity tensor field $\ltensor$ (as obtained from optimal nested lamination) and the
macroscopic finite element solution $\sstrain_h$ for the two-scale model with a piecewise constant tensor field $\stensor_h$ we obtain for the difference of the associated
compliance cost values the estimate
\begin{equation}\label{eq:estimate}
 \left| J[\ltensor,\lstrain] - J[\stensor_h,\sstrain_h] \right| \leq \sum_E \eta_E(\lstrain,\ltensor,\sstrain_h,\stensor_h) + \mathcal{R} \,,
\end{equation}
where the cellwise  $\eta_E$  values are decomposed as follows
\begin{equation*}
\begin{aligned}
  \eta_E(\lstrain,\ltensor,\sstrain_h,\stensor_h) &:= \eta_E^{\strain} + \eta_{\partial E}^{\strain} + \fr12 \eta_E^{\etensor}  \quad \text{with} \\
  \eta_E^{\strain}              &:= \left|\int_{E} \div \left\{ \stensor_h \eps{\sstrain_h} \right\} \cdot \left(\lstrain-\sstrain_h\right) \d x \right| \,,  \\
  \eta_{\partial E}^{\strain}   &:= \left|\int_{\partial E} j (\stensor_h \eps{\sstrain_h}) \cdot \left(\lstrain-\sstrain_h\right) \d a(x) \right| \,,  \\
  \eta_E^{\etensor}             &:= \left|\int_{E}  ( \ltensor - \stensor_h ) \, \eps{\sstrain_h} : \eps{\sstrain_h} \d x\right| \,,
 \end{aligned}
\end{equation*}
and $j$ denotes the jump of the normal stress across an edge. Furthermore, the remainder is given by
\[
\mathcal{R} := \frac1{2}a(e_\etensor; e_\strain,e_\strain)
\]
and is thus of higher order in the difference of states
$e_\strain := \lstrain -\sstrain_h$ and the difference of elasticity tensors $e_\etensor := \ltensor - \stensor_h$.
\end{theorem}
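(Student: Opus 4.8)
\emph{Proof strategy.} The plan is to reduce the cost difference to a single linear functional of the state error and to represent it by a dual weighted residual. Since the compliance carries no explicit dependence on the tensor, $J[\ltensor,\lstrain]=l(\lstrain)$ and $J[\stensor_h,\sstrain_h]=l(\sstrain_h)$, so the quantity to be bounded is simply $l(e_\strain)$ with $e_\strain=\lstrain-\sstrain_h$. The decisive structural feature I would exploit is that the compliance is \emph{self-adjoint}: reading \eqref{eq:weak} as the state equation and $J=l(\strain)$ as the objective, the adjoint equation associated with the Lagrangian $\Lag(\etensor,\strain,z):=l(\strain)-a(\etensor;\strain,z)+l(z)$ coincides with the state equation, so its solution equals the primal state. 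Consequently the dual weight is the state error $e_\strain$ itself, both on the continuous laminate side and on the discrete truss side.

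Next I would derive the error representation directly. Testing \eqref{eq:weak} for $\ltensor$ with $e_\strain$ gives $l(e_\strain)=a(\ltensor;\lstrain,e_\strain)$. Introducing the Galerkin residual $\rho(\varphi):=l(\varphi)-a(\stensor_h;\sstrain_h,\varphi)$ of the discrete solution, one has $l(e_\strain)=\rho(e_\strain)+a(\stensor_h;\sstrain_h,e_\strain)$. The cross term is evaluated using both weak forms and the symmetry of $a$: testing \eqref{eq:weak} and its discrete analogue with $\sstrain_h\in\Vh$ yields $a(\ltensor;\lstrain,\sstrain_h)=l(\sstrain_h)=a(\stensor_h;\sstrain_h,\sstrain_h)$, whence $a(\stensor_h;\sstrain_h,e_\strain)=-a(e_\etensor;\sstrain_h,\lstrain)=-a(e_\etensor;\sstrain_h,\sstrain_h)-a(e_\etensor;\sstrain_h,e_\strain)$. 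This produces the exact identity $l(e_\strain)=\rho(e_\strain)-a(e_\etensor;\sstrain_h,\sstrain_h)-a(e_\etensor;\sstrain_h,e_\strain)$. Symmetrizing the two tensor contributions through $-a(e_\etensor;\sstrain_h,\sstrain_h)-a(e_\etensor;\sstrain_h,e_\strain)=-\fr12 a(e_\etensor;\sstrain_h,\sstrain_h)+\fr12 a(e_\etensor;e_\strain,e_\strain)-\fr12 a(e_\etensor;\lstrain,\lstrain)$ yields precisely the prefactor $\fr12$ on the modeling term and the cubic remainder $\mathcal{R}=\fr12 a(e_\etensor;e_\strain,e_\strain)$. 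The same grouping results from the symmetric trapezoidal evaluation of $\Lag$ between the triples $(\ltensor,\lstrain,\lstrain)$ and $(\stensor_h,\sstrain_h,\sstrain_h)$: the state and adjoint residuals vanish at the continuous triple and coincide at the discrete triple by self-adjointness, so the prefactor $\fr12$ returns the residual with coefficient one, while the trilinearity of $a(\etensor;\strain,z)$ makes $\Lag$ cubic and its trapezoidal remainder equal to $\mathcal{R}$.

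It then remains to localize $\rho(e_\strain)$. I would integrate $a(\stensor_h;\sstrain_h,e_\strain)$ by parts element by element: the interior contributions assemble into the cellwise volume term $\int_E\div\{\stensor_h\eps{\sstrain_h}\}\cdot e_\strain$ defining $\eta_E^\strain$, the shared interior edges combine into the normal-stress jumps $j(\stensor_h\eps{\sstrain_h})$, and the $\Gamma_N$-edges merge with $l(e_\strain)$ into the Neumann mismatch, both collected in $\eta_{\partial E}^\strain$, while on $\Gamma_D$ the trace of $e_\strain$ vanishes. Galerkin orthogonality $\rho(\varphi_h)=0$ for $\varphi_h\in\Vh$ may be inserted before the integration by parts but is not needed for the inequality. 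Taking absolute values cellwise and summing, together with $\fr12\eta_E^\etensor$ from the modeling term and the remainder $\mathcal{R}$, gives \eqref{eq:estimate}.

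The step I expect to be the main obstacle is the modeling term. Both the symmetrization above and the symmetric representation of $\Lag$ naturally produce the tensor-difference contribution averaged over the two states, and only the computable half built from $\sstrain_h$ enters $\eta_E^\etensor$; one must therefore argue that the non-computable part $-\fr12 a(e_\etensor;\lstrain,\lstrain)$, together with the mixed term $a(e_\etensor;\sstrain_h,e_\strain)$, is consistently treated as higher order, so that $\mathcal{R}$ collects only the cubic quantity $\fr12 a(e_\etensor;e_\strain,e_\strain)$. A secondary, routine point is the non-conformity $e_\strain\notin\Vh$ and the mixed boundary conditions, which must be handled with care in the elementwise integration by parts; the symmetry and ellipticity of $a$ make the remaining manipulations standard.
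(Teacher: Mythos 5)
Your direct derivation is algebraically sound up to and including the exact identity
\begin{equation*}
l(e_\strain)=\rho(e_\strain)-\fr12 a(e_\etensor;\sstrain_h,\sstrain_h)+\fr12 a(e_\etensor;e_\strain,e_\strain)-\fr12 a(e_\etensor;\lstrain,\lstrain),
\qquad \rho(\varphi):=l(\varphi)-a(\stensor_h;\sstrain_h,\varphi),
\end{equation*}
and your elementwise localization of $\rho(e_\strain)$ into $\eta_E^{\strain}$ and $\eta_{\partial E}^{\strain}$ matches the paper's. The genuine gap is the last term. You propose to argue that $-\fr12 a(e_\etensor;\lstrain,\lstrain)$ can be ``consistently treated as higher order''. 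It cannot: it is linear in $e_\etensor$ and of order zero in $e_\strain$, i.e.\ of exactly the same order as the modeling indicator $\fr12\eta_E^{\etensor}$ that you keep. Since $e_\etensor$ is a modeling error that does not vanish under mesh refinement and $\lstrain$ is a fixed nonzero field, this term is $O(1)$; no expansion can fold it into $\mathcal{R}=\fr12 a(e_\etensor;e_\strain,e_\strain)$, which is quadratic in $e_\strain$. As it stands, your identity proves a different estimate than \eqref{eq:estimate}, one containing an extra non-computable leading-order term.

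The missing ingredient is optimality with respect to the design variable. The paper's proof applies the trapezoidal rule to $s\mapsto\Lag(\stensor_h+s e_\etensor,\sstrain_h+s e_\strain)$ and uses that $(\ltensor,\lstrain,-\lstrain)$ is a stationary point of the full Lagrangian, so that the endpoint value $f(1)=\nabla\Lag(\ltensor,\lstrain)\cdot(e_\etensor,e_\strain)^\top$ vanishes. This gradient contains, besides the combined state/adjoint residual $\Lag_{,\strain}(\ltensor,\lstrain)(e_\strain)=2l(e_\strain)-2a(\ltensor;\lstrain,e_\strain)=0$ (which vanishes because $\lstrain$ solves \eqref{eq:weak}), the design derivative $\Lag_{,\etensor}(\ltensor,\lstrain)(e_\etensor)=-a(e_\etensor;\lstrain,\lstrain)$, and it is exactly the vanishing of this component --- a consequence of $\ltensor$ being the \emph{optimal} nested-laminate tensor field, not of the state equation --- that removes your problematic term. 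Your sketch of the trapezoidal alternative only accounts for ``the state and adjoint residuals'' vanishing at the continuous triple, and your direct computation never uses stationarity in $\etensor$ at all; self-adjointness gives you the adjoint weight for free, but says nothing about design optimality. Once you invoke $a(e_\etensor;\lstrain,\lstrain)=0$ from the first-order optimality of the lamination construction, your identity reduces to the paper's error representation \eqref{eq:errorrep} and the rest of your argument goes through.
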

\begin{proof}
The proof follows the usual procedure for deriving dual weighted residual estimates (cf. \cite{BeKaRa00} for the method in general or \cite{GeRu15}
for a shape optimization problem based on microscopic sequential lamination).
We consider the Lagrangian
\begin{equation}
\label{eq:L}
\tilde \Lag(\etensor,\strain,p) := J[\etensor,\strain] + a(\etensor;\strain,p) - l(p)
\end{equation}
for arbitrary $\etensor$, $\strain$, and $p$.
The newly introduced function $p$ represents the solution to an adjoint equation. In case of the compliance cost functional it is, however,
trivially determined by $p=-u$, cf. \cite{Al02}.
According to our assumption $\lstrain$ and $\sstrain_h$ solve the continuous and the discrete state equation for given
$\ltensor$ and $\stensor_h$, respectively. Thus, we obtain $a(\ltensor;\lstrain,-\lstrain) - l(-\lstrain) = 0$,  $a(\stensor_h;\sstrain_h,-\sstrain_h) - l(-\sstrain_h) = 0$, which implies
\begin{equation}\label{eq:errorlagrangian}
 e_\Lag := \tilde \Lag(\ltensor,\lstrain,-\lstrain)- \tilde \Lag(\stensor_h,\sstrain_h,-\sstrain_h)
 = J[\ltensor,\lstrain]-J[\stensor_h,\sstrain_h]\,.
\end{equation}
Thus, we focus on the error in the Lagrangian involving the tensor fields
$\ltensor$, $\stensor_h$ and the corresponding primal solutions $\lstrain$, $\sstrain_h$.
We use the shortcut notation $\Lag(\etensor,\strain) := \tilde \Lag(\etensor,\strain,-\strain)$. \\
To derive a first order expansion of $e_\Lag$ we interpolate linearly between the quantities of the continuous lamination and the discrete two-scale problem
and obtain for the difference in the Lagrangian
\begin{equation*}
 e_\Lag = \int_0^1 \tdiff{s} \Lag (\stensor_h + s e_\etensor, \sstrain_h + s e_\strain ) \,\mathrm{d}s\,.
\end{equation*}
We define $f(s)$ as the above integrand, \ie\ $f(s) := \tdiff{s} \Lag (\stensor_h + s e_\etensor, \sstrain_h + s e_\strain)$,
and apply the trapezoidal rule $\int_0^1 f(s) \mathrm{d}s  =  \frac12 ( f(0) + f(1) ) - \frac12 \int_0^1 f''(s) s(1-s)\, \mathrm{d}s$.
All derivatives exist and can be explicitly calculated, since $\Lag$ is a polynomial in its arguments, but it is convenient to do this
only at the final stage of the computation.
From the assumption that $\ltensor$ is the optimal field of elasticity tensors and $\lstrain$ the associated primal elastic solution
we deduce that $(\ltensor,\lstrain, - \lstrain)$ is a stationary point of the Lagrangian.  Thus, we obtain that
\[
f(1) = \nabla \Lag ( \ltensor,\lstrain ) \cdot (e_\etensor,e_\strain)^\top = 0\,.
\]
Hence, using the remainder term
$\mathcal{R} := -\frac12 \int_0^1 \frac{\mathsf{d}^3}{\mathsf{d} s^3}
\Lag (\stensor_h + s e_\etensor, \sstrain_h + s e_\strain) \, s \, (1-s) \mathrm{d}s$
we end up with the following representation of the difference of compliance cost values:
\begin{equation}\label{eq:errorrep}
  e_\Lag =  \frac12 \Lag_{,\strain}  (\stensor_h, \sstrain_h) ( \lstrain - \sstrain_h )
          + \frac12 \Lag_{,\etensor} (\stensor_h, \sstrain_h) ( \ltensor - \stensor_h )
          + \mathcal{R} .
\end{equation}
Next, we decompose the first two terms into contributions on the elements and element boundaries of the macroscopic finite element mesh and obtain using integration by parts
\begin{align}
 &  \Lag_{,\strain}(\stensor_h,\sstrain_h) (\lstrain-\sstrain_h) \notag \\
 &= -2 \int_\workdom \stensor_h \, \eps{\sstrain_h} : \eps{\lstrain-\sstrain_h} \d x
    +2 \intga g \cdot (\lstrain-\sstrain_h) \d a(x) \notag \\
 &=  2 \, \sum_E \Big(
     \int_{E} \div \left\{ \stensor_h \eps{\sstrain_h} \right\} \cdot (\lstrain - \sstrain_h) \d x
    -\int_{\partial E} \stensor_h \eps{\sstrain_h} n \cdot (\lstrain-\sstrain_h) \d a(x) \notag  \\
 &   \qquad \qquad + \int_{\partial E \cap \Gamma_N} g \cdot (\lstrain-\sstrain_h) \d a(x) \Big) \notag  \\
 & \leq2 \sum_E \left( \eta_E^{\strain} + \eta_{\partial E}^{\strain} \right)  \label{eq:fullprimal}
\end{align}
with the  postulated residual terms
$\eta_E^{\strain} = |\int_{E} \div \left\{ \stensor_h \eps{\sstrain_h} \right\} \cdot \left(\lstrain-\sstrain_h\right) \d x |$ and
$\eta_{\partial E}^{\strain} = |\int_{\partial E} j (\stensor_h \eps{\sstrain_h}) \cdot$ $\left(\lstrain-\sstrain_h\right) \d a(x) |$.
Thereby,
$j(\stress)(x) = \frac12 \left[ \stress(x) \cdot \normal(x) \right]$ on interior edges, $j(\stress)(x) = \stress(x) \cdot \normal(x) - g(x)$ on $\Gamma_N$, and
$j(\stress)(x) = 0$ on $\Gamma_D$. Here,
$\left[ \stress(x) \cdot \normal(x) \right]$ denotes the jump of the normal stress across an edge, \ie\
$
\left[ \stress(x) \cdot \normal(x) \right] = \left( \left. \stress(x)  \right|_{E} - \left. \stress(x)  \right|_{E'}\right)\cdot \normal(x)
$
for $x \in E \cap E'$,
and $\normal$ being the normal on $E \cap E'$ pointing from $E$ to $E'$.
For the second term in \eqref{eq:errorrep}  we obtain the straightforward splitting
\begin{equation}
  \Lag_{,\etensor} (\stensor_h, \sstrain_h) ( \ltensor - \stensor_h )
=- \int_{\workdom} ( \ltensor - \stensor_h ) \, \eps{\sstrain_h} : \eps{\sstrain_h} \d x \le
\sum_{E}  \eta_E^{\etensor}\,.
\label{eq:fullcontrol}
\end{equation}
Finally, $\frac{\mathsf{d}^3}{\mathsf{d} s^3}
\Lag (\stensor_h + s e_\etensor, \sstrain_h + s e_\strain)=-6a(e_\etensor; e_\strain, e_\strain)$
does not depend on $s$, and a straightforward integration concludes the proof.
\end{proof}
\medskip

Let us remark that the first two terms in the definition of $\eta_E$  (\ie \; $\eta_E^{\strain} + \eta_{\partial E}^{\strain}$)
measure the discretization error. Indeed, $\eta_E^{\strain}$ and $\eta_{\partial E}^{\strain}$ include the element and the singular residual of the primal problem along with weighting terms.
They both are expected to vanish for the mesh size tending to $0$.  The last term represents the difference of the stored elastic energy
$\mathcal{E}[\etensor, \strain] = \tfrac12 \int_{\workdom} C \, \eps{\strain} : \eps{\strain} \d x$ for the different elasticity tensors $\ltensor$ and $\stensor_h$ evaluated for the discrete strain $\sstrain_h$.
This term measures the modeling error and is not required to vanish in general.
It measures the error caused by the choice of the rotated truss microstructure compared to the optimal nested laminate.

In  Theorem \ref{theorem:estimates}  we have restricted ourselves to an optimization of the compliance cost. A generalization to other cost functionals, such as a tracking type
cost functional, is straightforward. However, in this case the dual solution $p$ no longer coincides with $-u$. Furthermore, the Lagrangian is not necessarily a polynomial  in its arguments and thus there is in general no explicit representation of the residual term $\mathcal{R}$  as stated in the theorem.

\section{Derivation of effective local error indicators} \label{sec:weighting}
The cellwise values $\eta_E$ of the weighted a posteriori error estimate in Theorem \ref{theorem:estimates}
depend on the computed numerical solution $\sstrainh$ and on the unknown, continuous displacement field $\lstrain$ corresponding to the as well unknown, optimal elasticity tensor field $\ltensor$. Furthermore, the energy difference terms $\eta_E^C$ involve the optimal continuous elasticity tensor $\ltensor$.
In order to make practical usage of this estimate we need to compute
suitable approximations of the weighting terms  $\left(\lstrain-\sstrain_h\right)$
and to efficiently estimate  the locally optimal effective elasticity tensor $\ltensor$. Based on these approximations we then
replace $\eta_E$ by a computable approximation. This can then in the fully practical algorithm be used to steer the grid refinement and to
 evaluate a distribution of the modeling error due to the replacement of the nested lamination microstructure by the
 microstructure consisting of rotated, orthogonal trusses.

\emph{Approximation of $\lstrain$.}
On a given mesh $\mathcal{M}_h$ let us consider a two-scale model, where the effective discrete macroscopic strain $u_h \in \Vh$
in the Galerkin approximation of \eqref{eq:weak}
on  $\mathcal{M}_h$ results from a microscopic nested laminate construction.  We denote by $\ltensorh$ and $\lstrainh$ the optimal effective elasticity tensor and strain resulting from a minimization of the
compliance cost functional. Thereby, we take into account discrete tensor fields which are piecewise constant on the elements of the mesh.

It is well-known
that the $\ltensorh$ can be easily retrieved from the corresponding
stress field $\sigma_h=\ltensorh \epsilon[\lstrainh]$.
Indeed, the lamination directions coincide with the eigendirections of $\sigma_h$ and based on this insight the optimal ratios between the two materials in each involved lamination of the
local composite can be easily identified as functions of the eigenvalues of the stress $\sigma_h$ (for details we refer to \cite{Al02}). Altogether, we obtain
\begin{equation}\label{eq:C}
\ltensorh = \mathbf{C}(\alpha(\sigma_h),\lambda_1(\sigma_h),\lambda_2(\sigma_h))
\end{equation}
for some function
\begin{equation*}
 \mathbf{C}:  \R^3 \to \R^{2^4};\, \alpha,\lambda_1,\lambda_2  \mapsto \mathbf{C}(\alpha,\lambda_1,\lambda_2)\,,
\end{equation*}
which maps  the rotation angle $\alpha$ from the canonical basis into the basis of the eigendirections of the stress
and the two eigenstresses $\lambda_1$ and $\lambda_2$ to
the effective elasticity tensor of the optimal nested laminate associated with the corresponding underlying elastic stress. For the detailed derivation of this function
we refer to \cite{Al02}. In Section \ref{sec:impl} we give the explicit formulas.
Equation \eqref{eq:C} gives rise to a simple iterative minimization algorithm starting from some initial strain. Indeed, given $u^\mathrm{L}_{h,i-1}$
one first evaluates
\begin{equation*}
C_{h,i}^\mathrm{L}=\mathbf{C}(\alpha(\sigma_{h,i}),\lambda_1(\sigma_{h,i}),\lambda_2(\sigma_{h,i}))
\end{equation*}
with
$\sigma_{h,i}=C^\mathrm{L}_{h,i-1} \epsilon[u^\mathcal{L}_{h,i-1}]$
and then computes
$u^\mathrm{L}_{h,i}$ as the discrete solution of the Galerkin approximation of \eqref{eq:weak} with the elasticity tensor $C_{h,i}^\mathrm{L}$ in $\Vh$.
As a suitable initial elasticity tensor and strain field we consider $\stensorh$ and $\sstrainh$, respectively.
Then, we compute for some fixed $k\in \mathbb{N}$ the elasticity tensor $\ltensorhk$ and the associated strain $\lstrainhk$.

Following the usual procedure in the context of the dual weighted error estimation approach we now use a higher order interpolation of a discrete PDE solution in a neighborhood of each cell as a higher order approximation for the continuous PDE solution and apply this to $\lstrainhk$. Specifically, for a cell $E$ of an adaptive mesh $\mathcal{M}_h$ we proceed as follows. Let us assume that $\mathcal{M}_h$ is generated based on an adaptive quadtree data structure and that $E$ is one of the four child cells of a coarser cell $E^c$. Now, we consider the Lagrangian interpolation $\mathcal{I}^{(4)}_h$ on the space of bi-quartic polynomials based on functional evaluation on the union of bi-quadratic Lagrangian nodes on the children of $E^c$. Given   $\lstrainhk$ we define  $\tlstrainhk$ on the cell $E$ as
$\mathcal{I}^{(4)}_h \lstrainhk |_E\,$.

With $\tlstrainhk$ at hand, we can define the approximations
\begin{equation}\label{eq:approxweight}
\tilde \eta_E^{\strain}            = \left|\int_{E} \div \left\{ \stensor_h \eps{\sstrain_h} \right\} \cdot \left(\tlstrainhk-\sstrain_h\right) \d x \right|, \quad
\tilde \eta_{\partial E}^{\strain} = \left|\int_{\partial E} j (\stensor_h \eps{\sstrain_h})          \cdot \left(\tlstrainhk-\sstrain_h\right) \d a(x) \right|,
\end{equation}
which can be computed based on a Gaussian (tensor product) quadrature with order $5$ and $3\times 3$ nodes, respectively.

\emph{Approximation of $\ltensor$.} The stress tensor $\sigma$ is uniquely determined by the rotation angle $\alpha$ and its two eigenvalues $\lambda_1$ and $\lambda_2$, \ie
\[
\sigma = R(\alpha)  \left( \begin{array}{cc}
      \lambda_1 & 0 \\
      0 & \lambda_2 \\
   \end{array} \right) R(\alpha)^T \,,
\]
where $R(\alpha)$ is the rotation from the canonical basis into the basis of the eigenstrains.
On the other hand $\sigma = \ltensor \epsilon[\lstrain]$  with $\ltensor = \mathbf{C}(\alpha,\lambda_1,\lambda_2)$.
Hence, we define a function
\begin{equation}\label{eq:newton}
F(\alpha,\lambda_1,\lambda_2) = \mathbf{C}(\alpha,\lambda_1,\lambda_2) \epsilon[\lstrain] -
R(\alpha)  \left( \begin{array}{cc}
      \lambda_1 & 0 \\
      0 & \lambda_2 \\
   \end{array} \right) R(\alpha)^T\,,
\end{equation}
which maps the three parameters $\alpha, \, \lambda_1, \, \lambda_2$ to a symmetric $2\times 2$ matrix with its three degrees of freedom.
Roots of this function correspond  to stresses $\sigma$ and elasticity tensors $\ltensor$
for a given strain tensor $\epsilon[\lstrain]$ in the optimal laminate configuration.
In the implementation we use Newton's method to compute for given $\epsilon[\tlstrainhk]$ a root
$(\alpha, \lambda_1, \lambda_2)$ of $F$. Thus, we obtain
$\tltensorhk = \mathbf{C}(\alpha,\lambda_1,\lambda_2)$ as an admissible elasticity tensor corresponding  to an optimal nested laminate microstructure
for a given approximation  $\tlstrainhk$ of the optimal strain.
Finally, based on  $\tltensorhk$ we compute as an approximation of the local modeling error term $\eta_E^{\etensor}$
\begin{equation}\label{eq:approxC}
\tilde \eta_E^{\etensor} =  \int_{E} ( \tltensorhk - \stensor_h ) \, \eps{\sstrain_h} : \eps{\sstrain_h} \d x
\end{equation}
applying  the above  Gaussian (tensor product) quadrature of order $5$.

\emph{Approximate evaluation of $\eta_E$.}
Taking into account the above approximation of the local weighting terms in $\tilde \eta_E^{\strain}$, $\tilde \eta_{\partial E}^{\strain}$
and the local modeling error term $\tilde \eta_E^{\etensor}$, we obtain as an approximate upper bound for the
difference $J[\ltensor,\lstrain] - J[\stensor_h,\sstrain_h]$ in \eqref{eq:estimate} the term
$\sum_E \tilde \eta_E(\sstrain_h,\stensor_h) + \mathcal{R}$ with
\begin{equation}
\tilde \eta_E(\sstrain_h,\stensor_h) =  \tilde \eta_E^{\strain} + \tilde \eta_{\partial E}^{\strain} + \fr12 \tilde \eta_E^{\etensor}\,,
\end{equation}
which is evaluated based on a given pair of discrete strain $\sstrain_h$ and discrete elasticity tensor $\stensor_h$, which correspond to the optimal compliance cost in the case of the microstructure formed by rotated, orthogonal trusses of varying width and rotation angle.

In what follows, we give
some numerical evidence that an early truncation of the laminates algorithm already yields a sufficiently good approximation to the fully converged
solution of the lamination model. In Figure \ref{fig:lamApprx} we show the absolute error in the compliance and the $L^2$-error in the elastic solution for each
iteration of the alternating lamination algorithm, see below,  starting from the two-scale solution $\sstrainh$ corresponding to $\stensorh$. The difference is computed \wrt\
the final value at convergence, \ie\ when subsequent compliance values differ by no more than $10^{-8}$, of the lamination algorithm starting from the same initial values.
In fact, we will use $k=50$ iterations for our numerical computations.

\begin{figure}[!ht]
\hfill
\includegraphics[width=.6\linewidth]{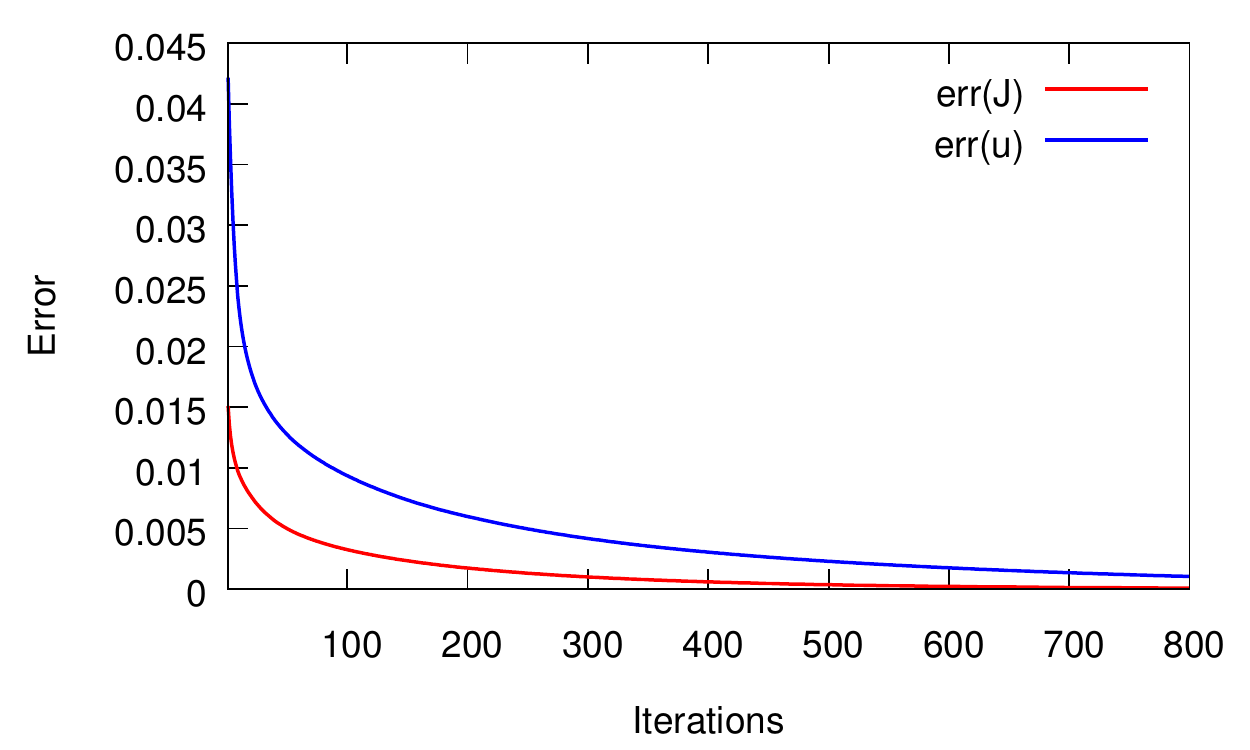}
\hfill\mbox{}
\caption{After each iteration of the laminates algorithm, starting from an initial state given by the two-scale model, the compliance objective and elastic solution
are compared to the final state. The error in the compliance is the absolute value, the error in the elastic solution is the $L^2$-error.}
\label{fig:lamApprx}
\end{figure}

\section{Implementation} \label{sec:impl}
Our macroscopic computations are performed on an regular quadrilateral mesh implemented within the \texttt{QuocMesh}
library\footnote{{http://numod.ins.uni-bonn.de/software/quocmesh}}.
The library also provides a collocation type boundary element method for the solution of cell problems on the microscale.
Furthermore it includes a classical Newton scheme using optimal step size control to solve \eqref{eq:newton}.
Adaptive refinements are realized via uniform subdivision and handling of constrained hanging nodes.
Checkerboard instabilities were reported for a density optimization model by Jog and Haber \cite{JoHa96}
and by Jouve and Bonnetier \cite{BoJo98} for the sequential lamination microstructure.
Following the observations reported in these papers we
use bi-quadratic finite elements for stabilization on the macroscale.
For numerical integration we use a
Gauss quadrature rule of order 5 which turned out to be sufficient. \\
We reimplemented the alternating algorithm for sequential lamination microstructures suggested in \cite{AlBoFr97} and already
described in \cite{GeRu15}.

Given the rotation $\alpha$ of the dominant eigenvector of $\sigma_h$ and its eigenvalues $\lambda_1$ and $\lambda_2$, the lamination
parameters and the effective elasticity tensor in equation \eqref{eq:C} are explicitly given by
\begin{equation*}
\begin{aligned}
m[\stress_h]      &= \frac{|\lambda_2(\stress_h)|}{|\lambda_1(\stress_h)|+|\lambda_2(\stress_h)|}, \;
\theta[\stress_h] = \min\left\{ 1, \sqrt{ \frac{2 \mu\! +\! \lambda}{4 \mu (\mu\!+\!\lambda) \, l} } ( |\lambda_1(\stress_h)|+|\lambda_2(\stress_h)| ) \right\} \,,\\[6pt]
C^\ast_{mnop}[q] &= R[\alpha] \, \bar{C}[m,\theta] := Q_{mi}[\alpha] \, Q_{nj}[\alpha] \, Q_{ok}[\alpha] \, Q_{pl}[\alpha] \; \bar{C}_{ijkl}[m,\theta] \,, \\[6pt]
\bar{C}_{1111}[m,\theta] &= \frac{ 4 \kappa \mu (\kappa\!+\!\mu) \theta (1\!-\!\theta (1-m))(1\!-\!m) } {4 \kappa \mu m (1\!-\!m) \theta^2 + (\kappa\!+\!\mu)^2 (1\!-\!\theta)}, \;
\bar{C}_{2222}[m,\theta] = \frac{ 4 \kappa \mu (\kappa\!+\!\mu) \theta (1\!-\!\theta m)m }         {4 \kappa \mu m (1 \!-\!m) \theta^2 + (\kappa\!+\!\mu)^2 (1\!-\!\theta)} \,, \\
\bar{C}_{1122}[m,\theta] &= \frac{ 4 \kappa \mu \lambda \theta^2 m (1\!-\!m) }                  {4 \kappa \mu m (1\!-\!m) \theta^2 + (\kappa\!+\!\mu)^2 (1\!-\!\theta)} \,.
\end{aligned}
\end{equation*}
Here $R$ is a linear mapping rotating the tensor $\bar{C}$ given in reference configuration into the appropriate coordinate frame given by rotation parameter $\alpha$.
In its definition $Q$ are $2 \times 2$ rotation matrices and the Einstein summation convention is used. The bulk modulus is defined as $\kappa  = \lambda + \mu$
and $l$ is a Lagrangian multiplier used in the alternating algorithm to enforce the volume constraint.
The tensor $\bar{C}$ is complemented using the symmetry relations and filling the remaining entries with  $0$. This yields a singular elasticity tensor that has to
be regularized by adding a small constant.

Within each cell the underlying microstructure is specified by a small set of parameters leading to a finite
dimensional constrained  optimization problem solved by the open source software \texttt{Ipopt} \cite{Wa02,WaBi06}.
It implements an SQP type minimization scheme using limited memory BFGS updates to approximate the Hessian.
To steer the adaptivity we follow a D\"orfler strategy \cite{Do96} marking cells giving rise to the top $40\%$ of the total
estimated error.

\section{Numerical results} \label{sec:results}
In the following we discuss the concrete application of our adaptive algorithm to  four textbook examples of 2D shape optimization problems.
For each of them we  visualize the resulting solution and refinement patterns and list the error indicator values and the computed cost values
at each refinement step.
As expected, the modeling error cannot be reduced beyond a problem-dependent positive lower bound.
In fact, after a substantial reduction of the discretization and the modeling error in the initial refinement stages, the algorithms builds an oscillatory pattern on the grid scale in
certain regions. We discuss below the implications of these observations for the appropriate usage of the a posteriori error control.

The first scenario is a carrier plate modeled by a square domain $\workdom = [0,1]^2$ fixed at the bottom and subject to a uniform shearing load
on the top, cf.~Figure~\ref{fig:res_carrier}.
The next scenario is a cantilever, cf.~Figure~\ref{fig:res_cantilever}. It is modeled by a rectangular domain $\workdom = [0,1] \times [0,0{.}5]$,
fixed at the left hand side and subject to a downwards pointing load located in the center of the right hand side.
The third example is a bridge configuration given on the domain $\workdom = [0,1] \times [0,0{.}5]$, cf.~Figure~\ref{fig:res_bridge}.
We prescribe roller boundary conditions on a small fraction of the lower boundary on the left and right hand side,
\ie\ only the vertical displacement component is kept fixed there. In between a uniform downwards pointing load is applied.
Finally we consider an L-shaped domain $\workdom = [0,1]^2 \setminus [0{.}5,1]^2$
fixed on top and subject to a downwards pointing load in the center of the lower right boundary, cf.~Figure~\ref{fig:res_ldomain}.
All applied loadings have a magnitude of $1$. For the cantilever scenario the global volume fraction is constrained to $50\%$,
for all other scenarios to $67\%$, respectively.

For the carrier plate scenario we show the macroscopic computational domain at several steps of the adaptive scheme
in Figure~\ref{fig:res_carrier}.
Moreover the optimized microscopic geometries underlying each macroscopic element of the grid,
the associated elementwise volume densities,
and a color coding of the von Mises stress are shown.
For the other scenarios we depict grid, visualization and von Mises stress of two intermediate refinement steps in Figures
\ref{fig:res_cantilever}, \ref{fig:res_bridge}, \ref{fig:res_ldomain}.
To analyze the computed error indicators leading to the refined grids we list the contributions of each term for every refinement
step in Tables \ref{fig:res_carrier_error} and \ref{fig:res_other_error}.
A striking observation is that the error indicator does not decrease at later stages of the adaptive algorithm. As noted earlier one cannot expect the
indicator to go to zero due to the non vanishing modeling error. However, the tables show that while the modeling error increases at most mildly both
discretization error terms show significant growth after a few refinement steps.
In fact, this appears in regions where the stress tensor would indicate a nested laminate construction as the optimal local pattern.
Thus, this locally optimal two-scale geometry cannot be realized by the rotated truss construction.
As a compensation of this deficiency the algorithm seems to try to establish an additional intermediate scale on the grid level,
This does not appear to be a  classical numerical instability caused by a non appropriate numerical discretization ansatz but
a laminate type oscillating pattern as a consequence of the non optimality of the choosen microscopic model.
 This can be observed in the images,
as for example in the last row of Figure \ref{fig:res_carrier},
which shows a color coding of the local discretization error contributions.
These oscillations have small effect on the energy but
lead to significant additional discretization errors and therefore to the observed increase of the global discretization error estimate.
Furthermore, these oscillations have an impact on the  optimal compliance cost computed numerically on the various grids.
This optimal cost depends on the discrete solution of the underlying constrained optimization problem and is characterized by a discrete saddle point of the
Lagrangian \eqref{eq:L}.
In particular, it increases if the elastic problem is not fully resolved, which is the case if the coefficients
are rapidly varying. These  discretization errors
give rise to a substantial increase in the later refinement stages, as can be seen in the tables.
The process seems to be self-propelling and propagating during subsequent refinement steps, resulting in
refined computational domains that seem uneligible for the considered scenarios.

As a comparison we show a refined grid after $24$ refinement steps for the carrier plate scenario using
the sequential lamination model from the earlier work \cite{GeRu15} in Figure~\ref{fig:res_carrier_laminates}. Here one observes sharply
resolved interfaces between regions of diverse material density. However, in the central gray domain where the optimal pattern is an actually nested laminate
the grade of refinement keeps being moderate.
The observed phenomenon does not stem from the adaptivity of the grid, as illustrated by the
corresponding results based on four uniform refinement steps for
the cantilever scenario, starting from the same initial grid as before.
In Table \ref{fig:res_carrier_erroruni} we again recognize the dominant
increase of the discretization error terms.
There is no a priori error analysis of the modeling error and the discretization error at hand which would
allow to study this type of phenomena in full depth. Nevertheless in the context of a posteriori error estimation these observations lead to a practical strategy for the use of
the error estimate as a stopping criterium in the refinement process to avoid overrefinement.

In summary, our results illustrate good convergence of the objective functional and of the
general structure of the two-scale shape. They also show that the modeling error can be reduced via the adaptive meshing
strategy until a problem-dependent lower bound is approached.
At this point the limit of the chosen microscopic model of the rotated trusses for the local shape pattern is obviously reached, as indicated by the necessity
to construct an intermediate pattern via oscillations on the grid scale.  The proposed a posteriori error bounds show a clear indication of this effect
and allow to stop the algorithm at that point. Extreme refinement appears to be neither necessary nor useful in the present setting.
Without an additional regularity term in the cost functional which penalizes strong variations in the parameters of the
microscopic patterns, the cost functional cannot be further reduced via a refinement of the grid, as observed in the tables.
We also observe  a clear coincidence of the increase in discretization error with the increase in the numerically computed cost value.

%
%

\begin{figure}[!ht]
\includegraphics[width=.19\linewidth]{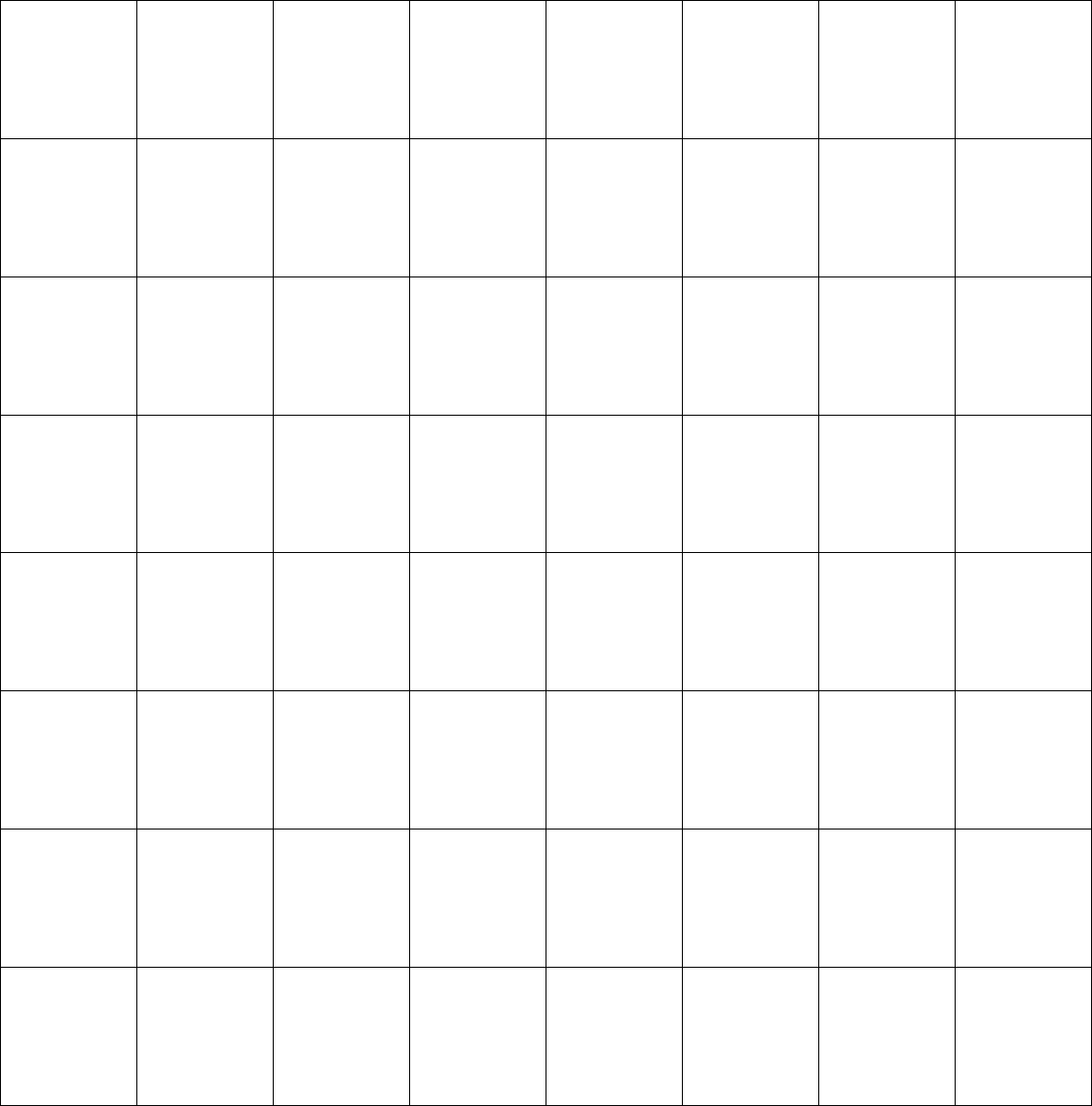}
\hfill
\includegraphics[width=.19\linewidth]{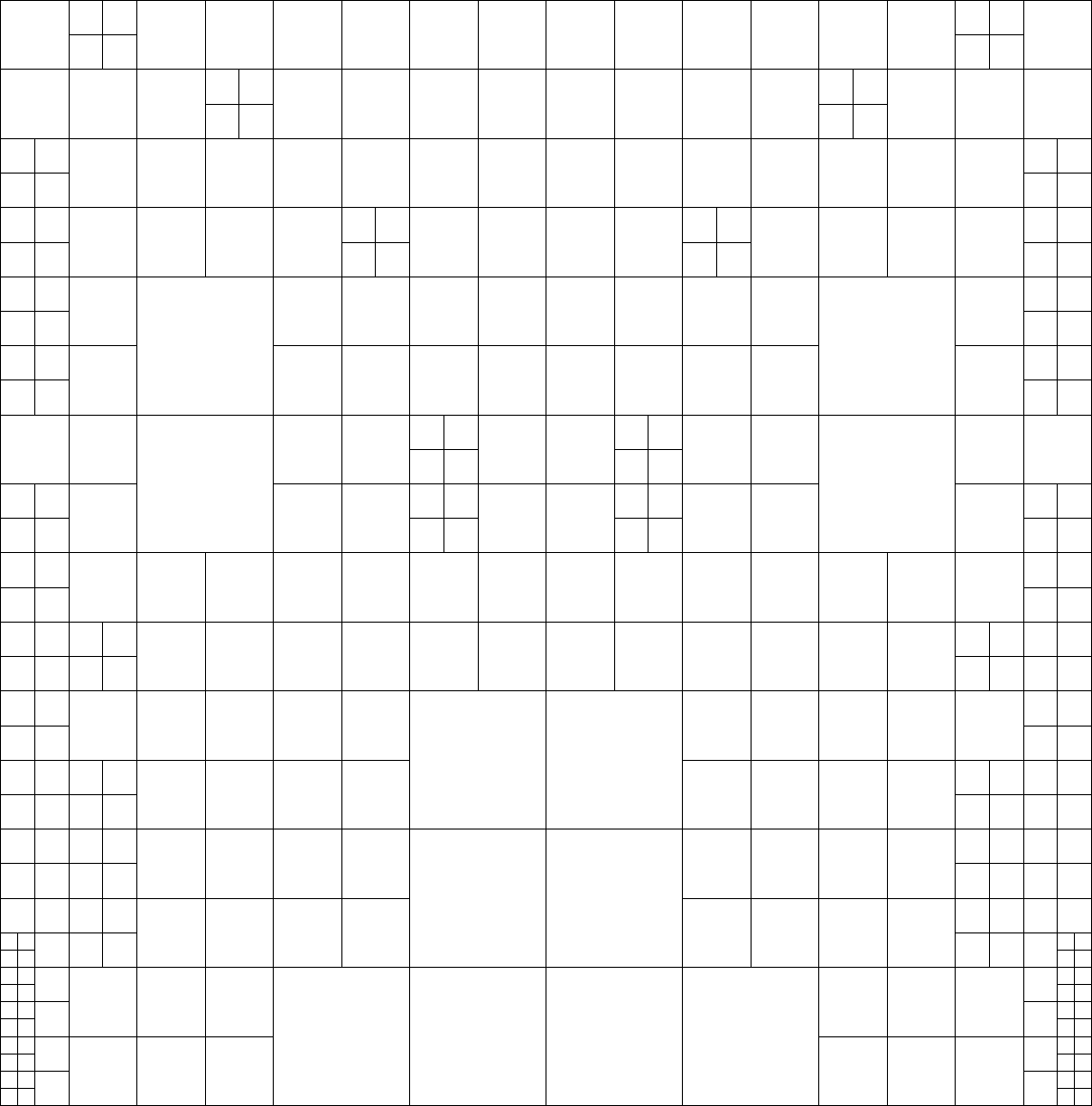}
\hfill
\includegraphics[width=.19\linewidth]{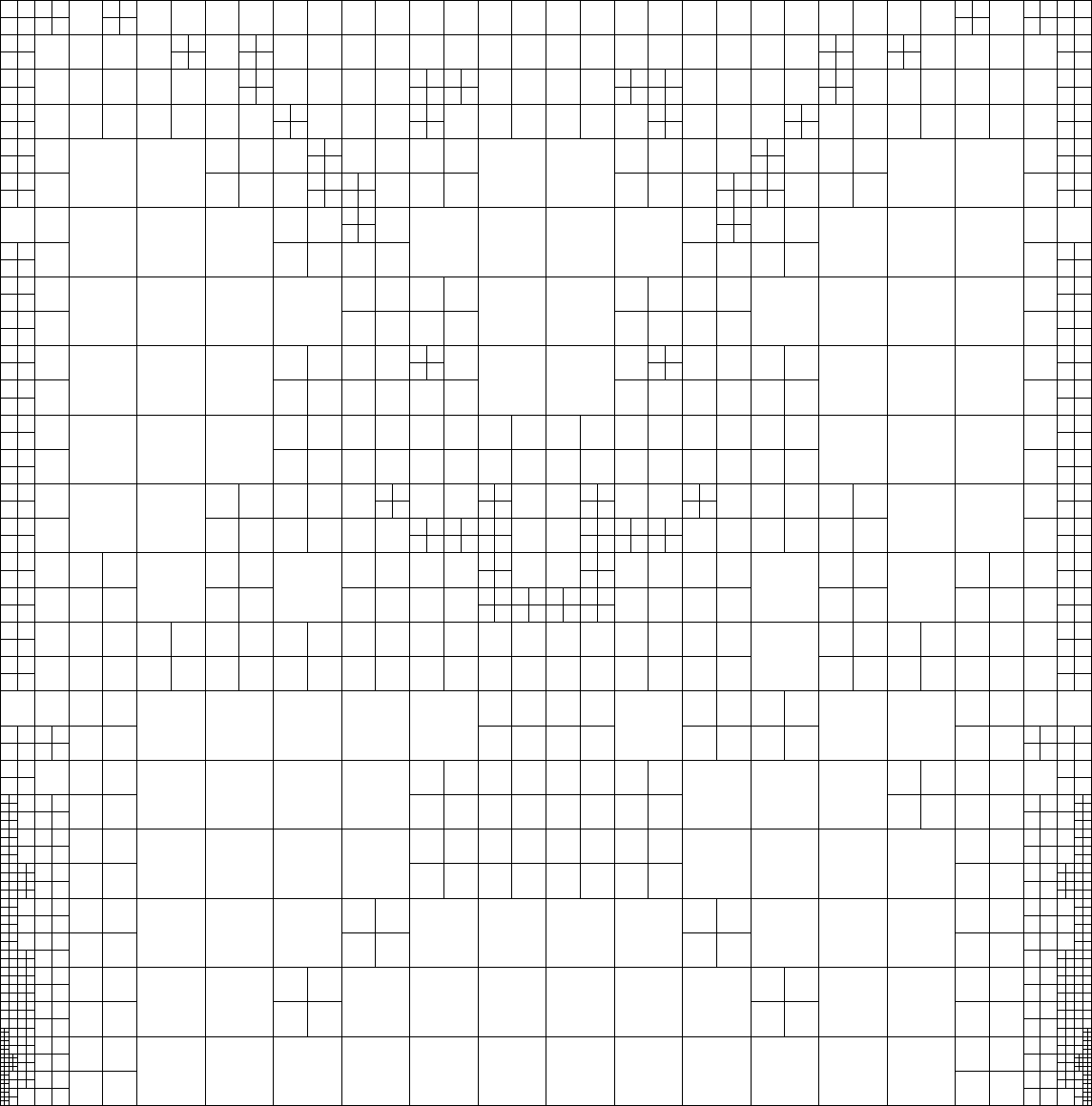}
\hfill
\includegraphics[width=.19\linewidth]{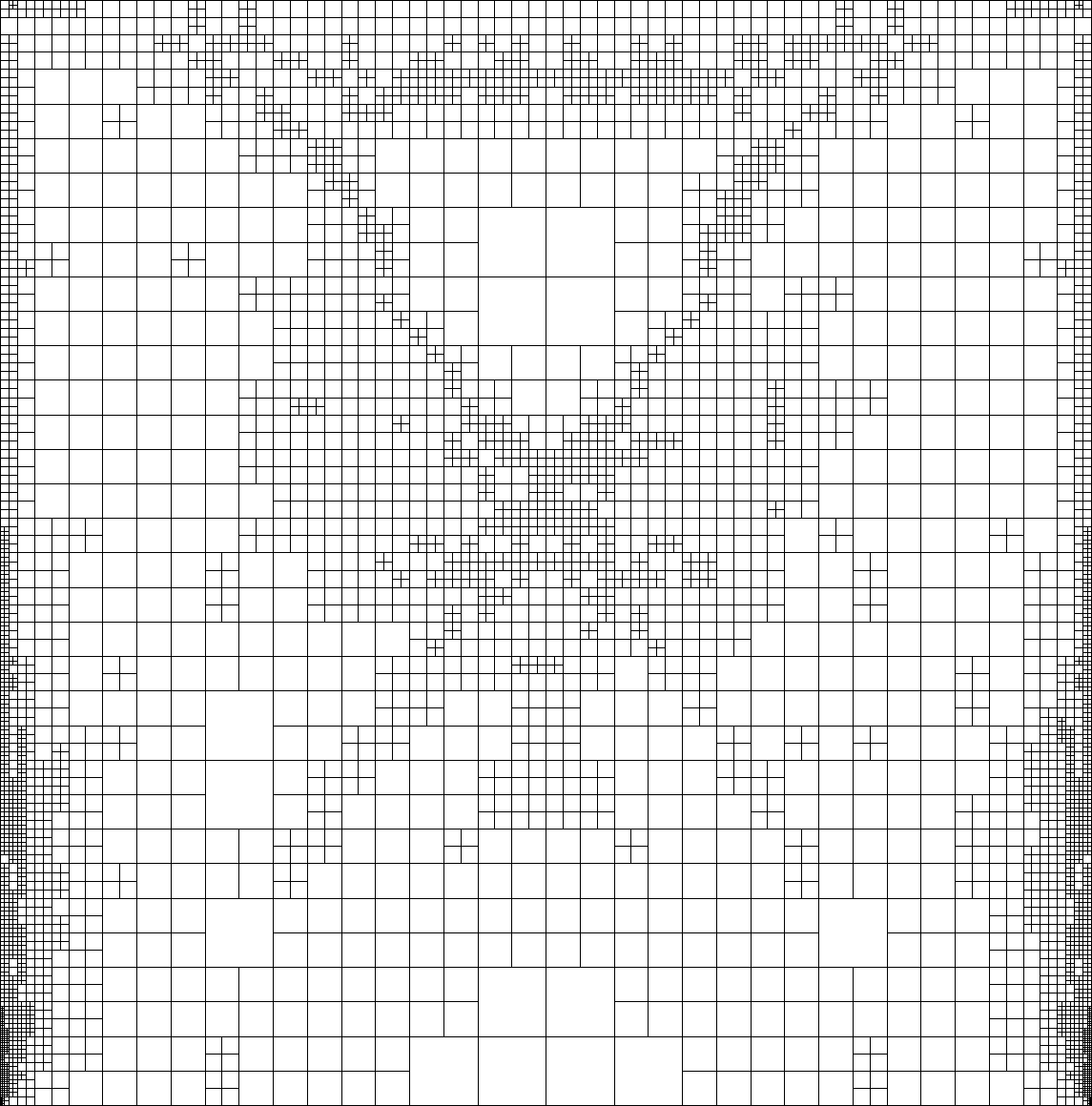}
\hfill
\includegraphics[width=.19\linewidth]{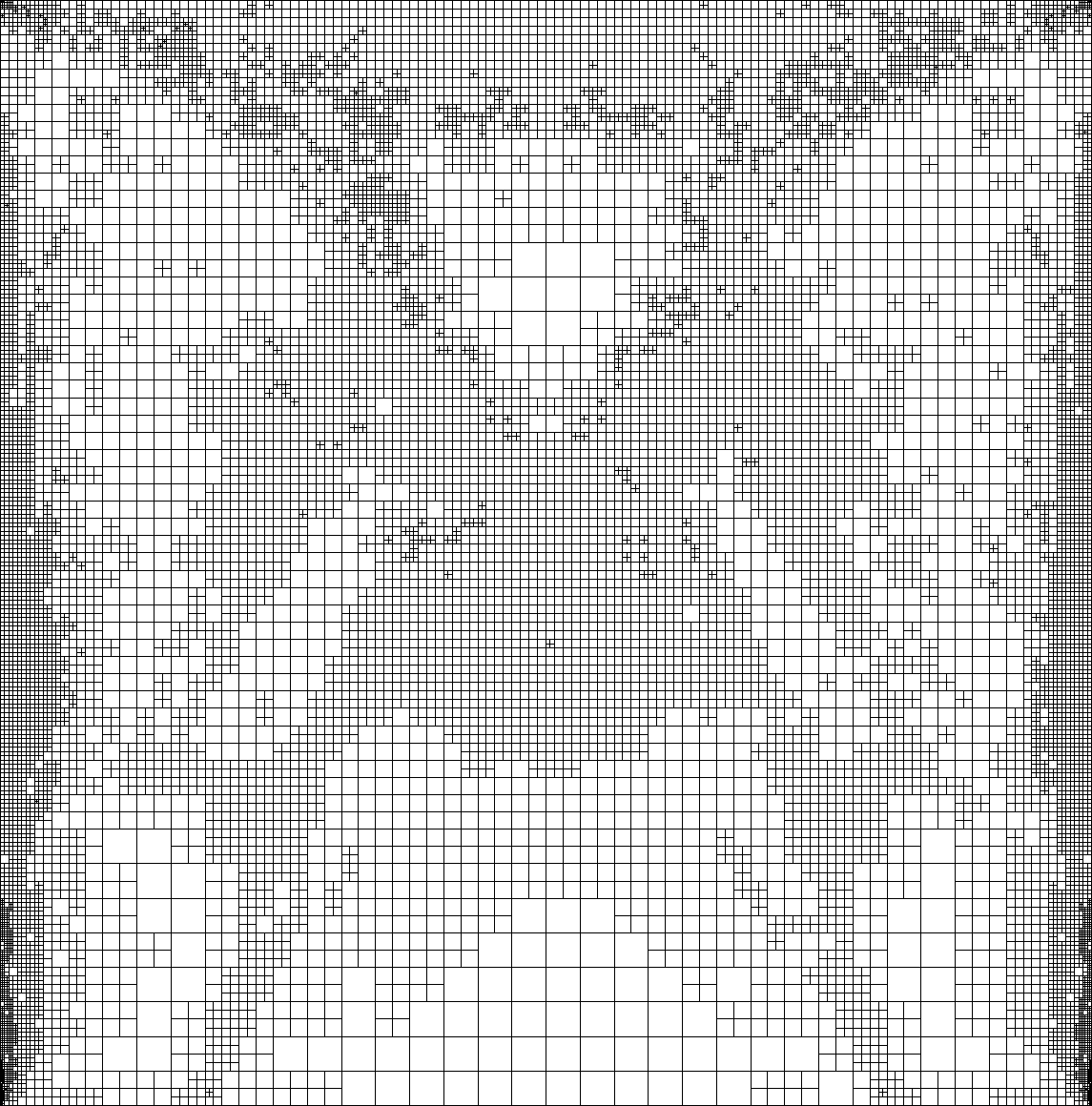} \\ \smallskip

\includegraphics[width=.19\linewidth]{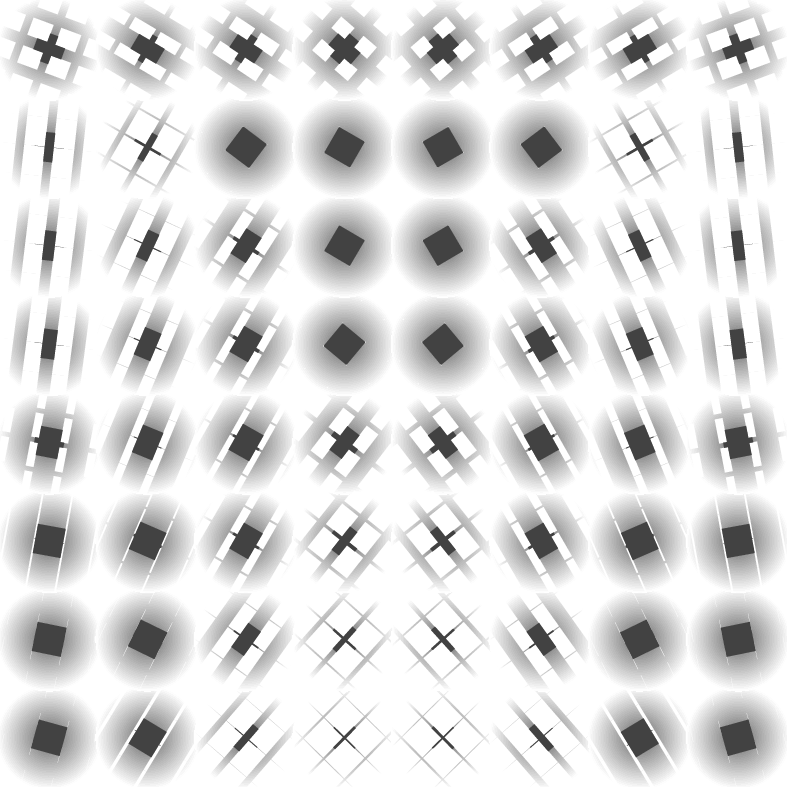}
\hfill
\includegraphics[width=.19\linewidth]{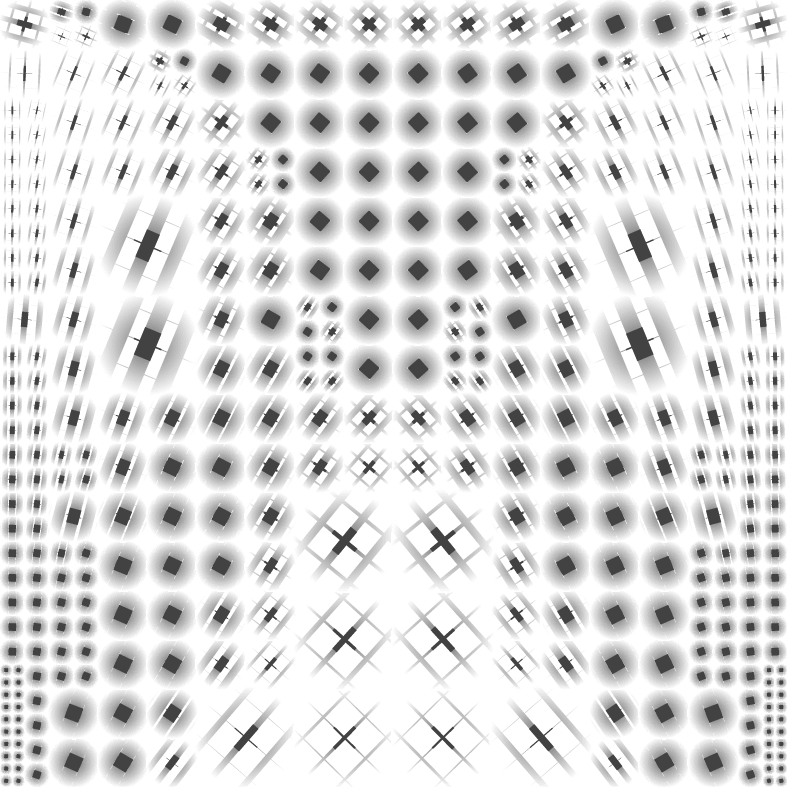}
\hfill
\includegraphics[width=.19\linewidth]{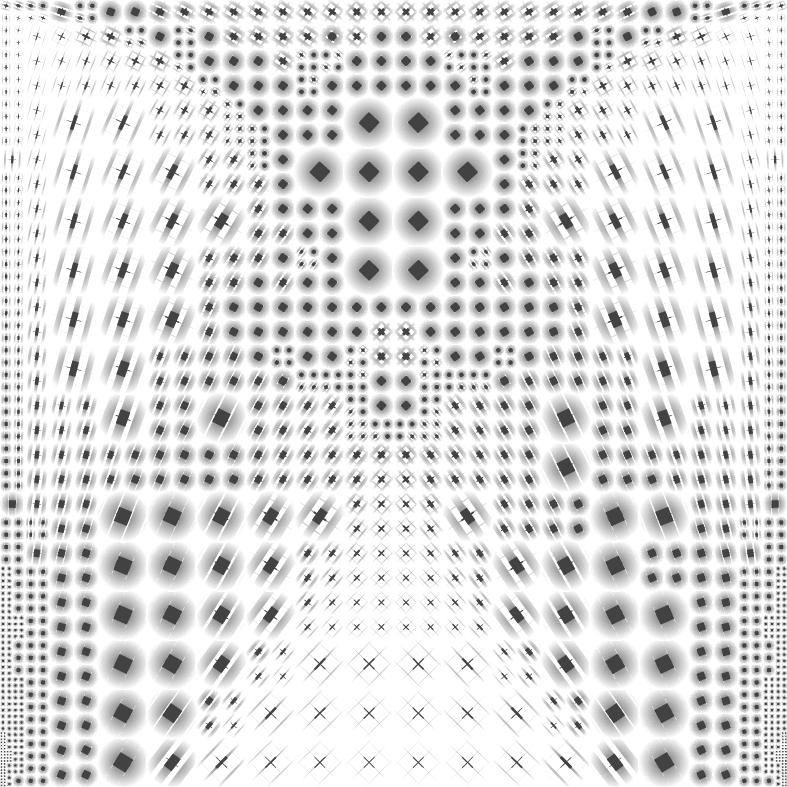}
\hfill
\includegraphics[width=.19\linewidth]{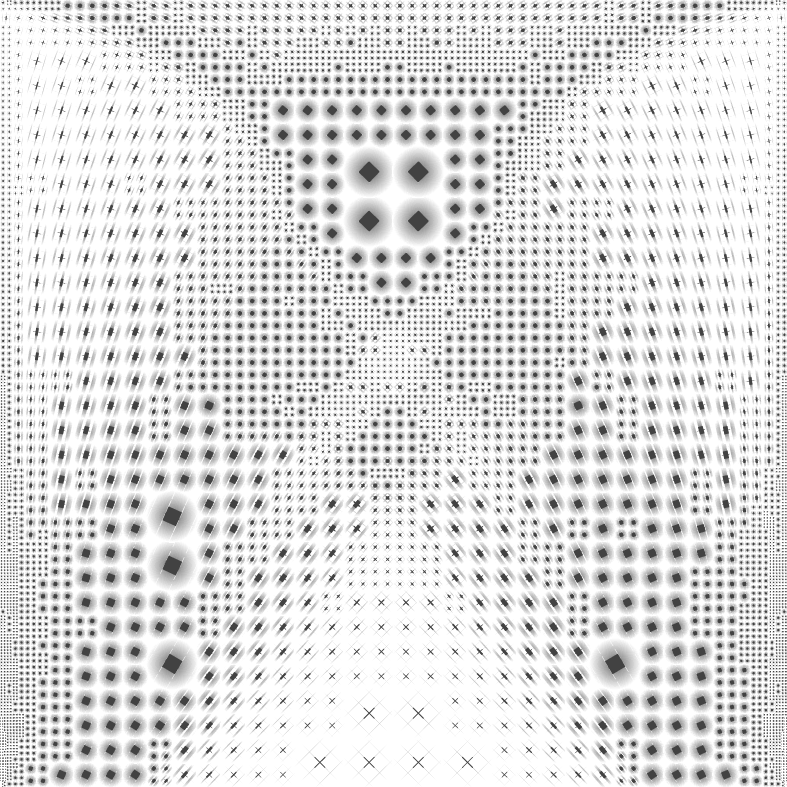}
\hfill
\includegraphics[width=.19\linewidth]{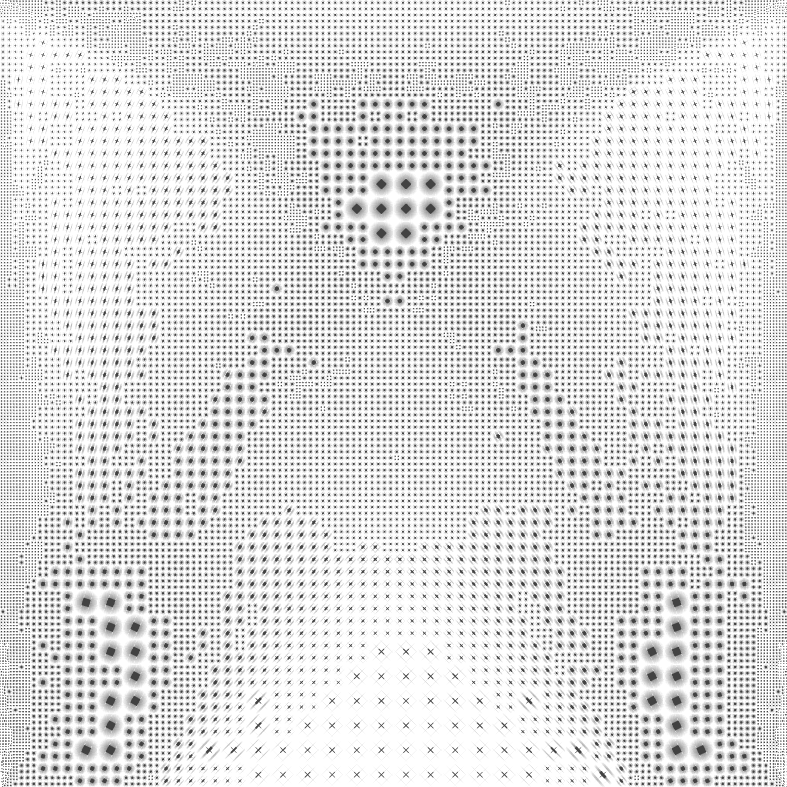} \\ \smallskip

\includegraphics[width=.19\linewidth]{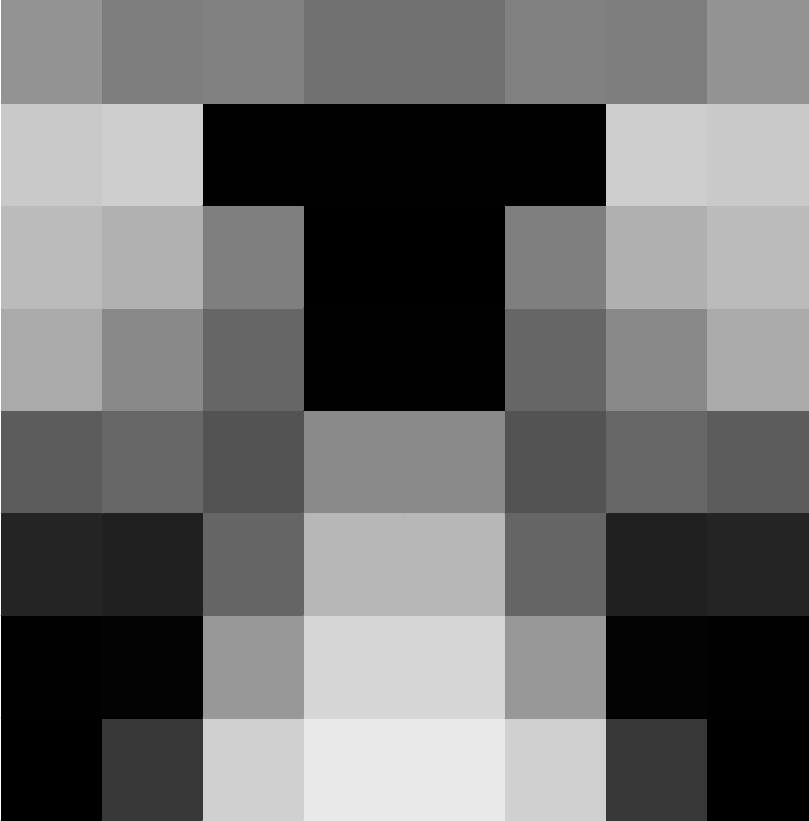}
\hfill
\includegraphics[width=.19\linewidth]{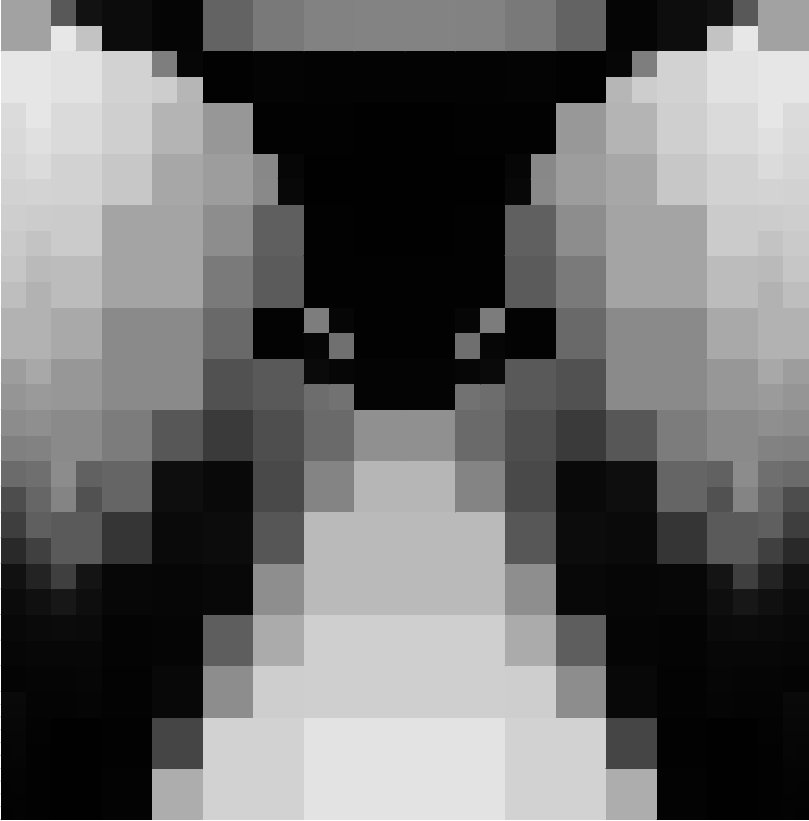}
\hfill
\includegraphics[width=.19\linewidth]{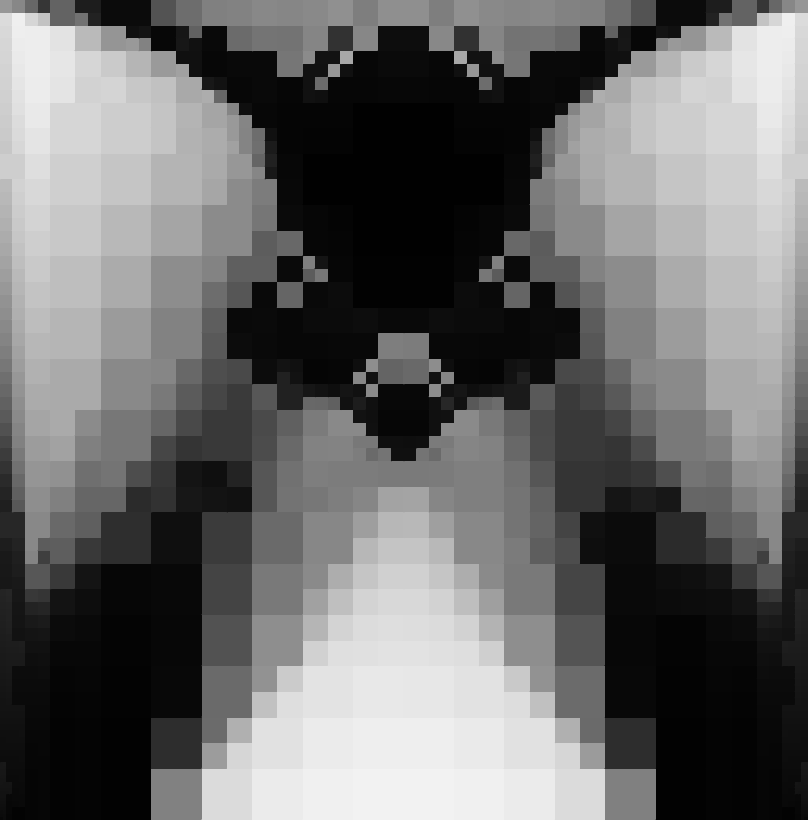}
\hfill
\includegraphics[width=.19\linewidth]{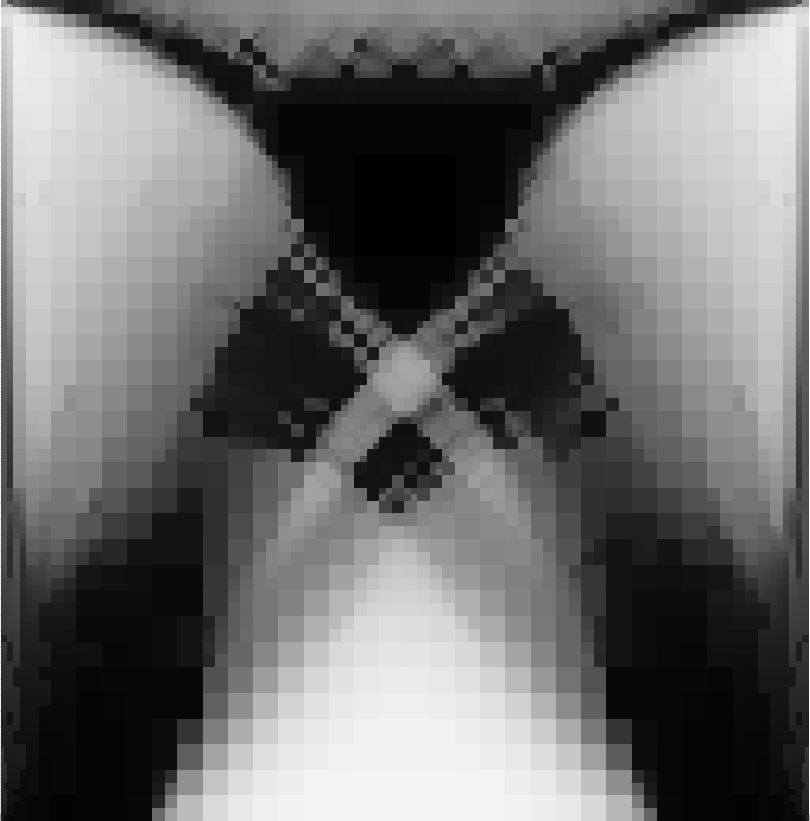}
\hfill
\includegraphics[width=.19\linewidth]{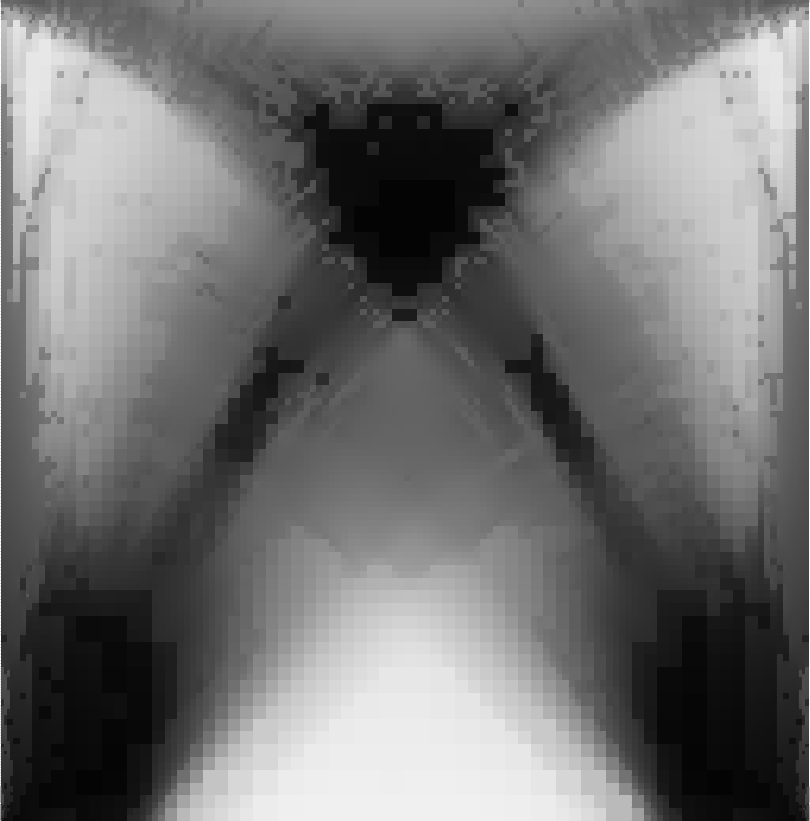} \\ \smallskip

\includegraphics[width=.19\linewidth]{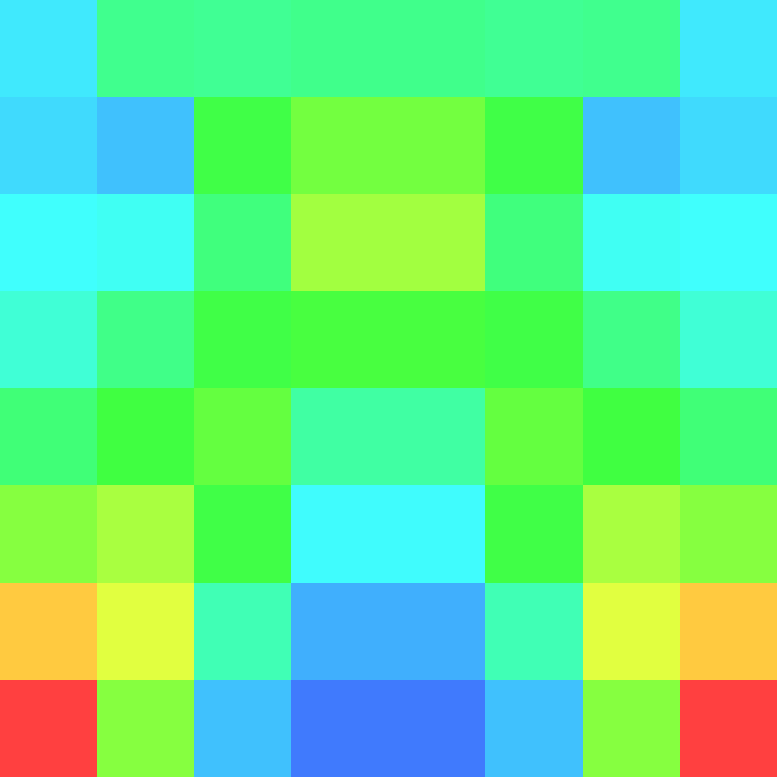}
\hfill
\includegraphics[width=.19\linewidth]{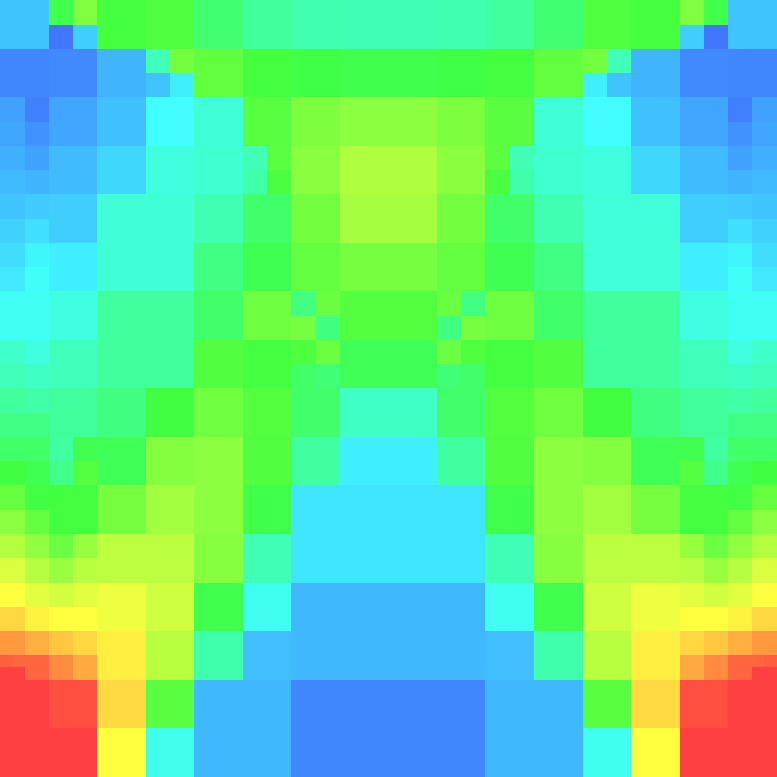}
\hfill
\includegraphics[width=.19\linewidth]{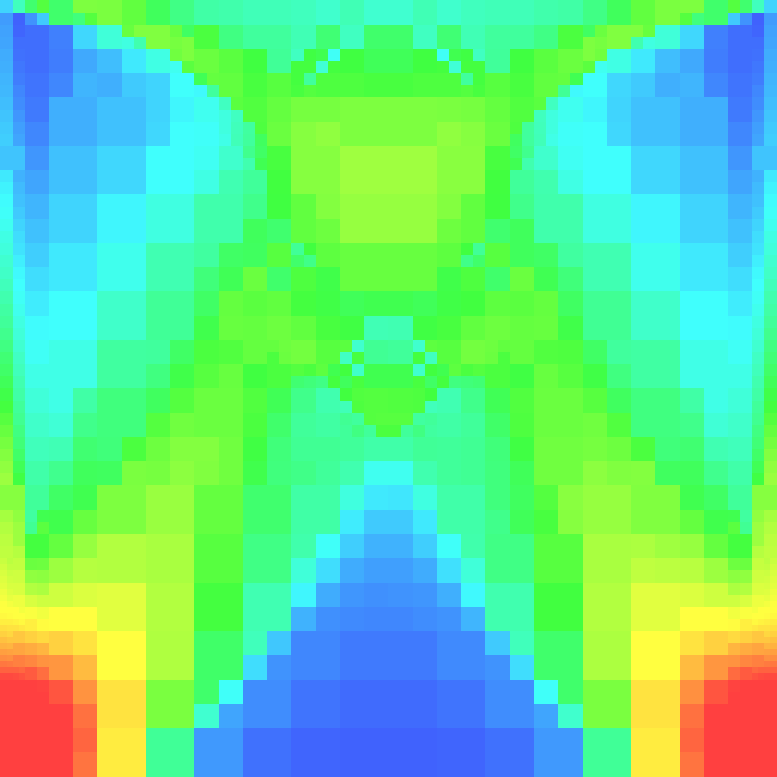}
\hfill
\includegraphics[width=.19\linewidth]{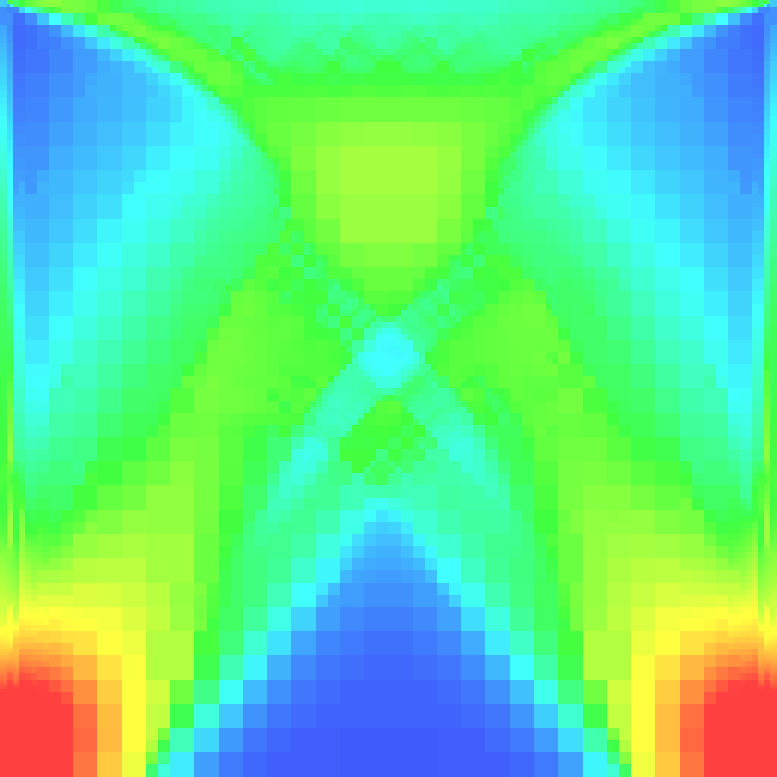}
\hfill
\includegraphics[width=.19\linewidth]{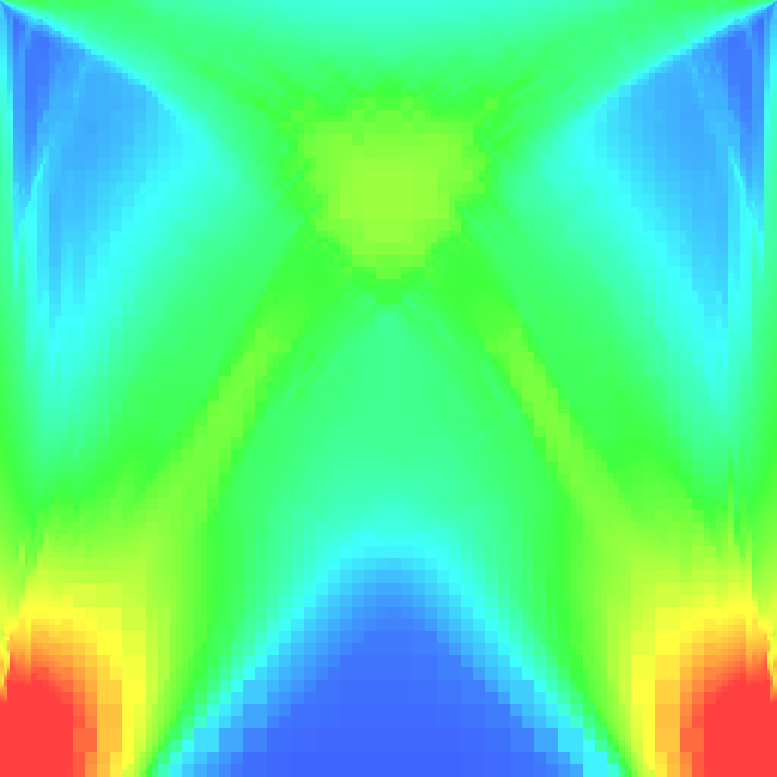} \\ \smallskip

\includegraphics[width=.19\linewidth]{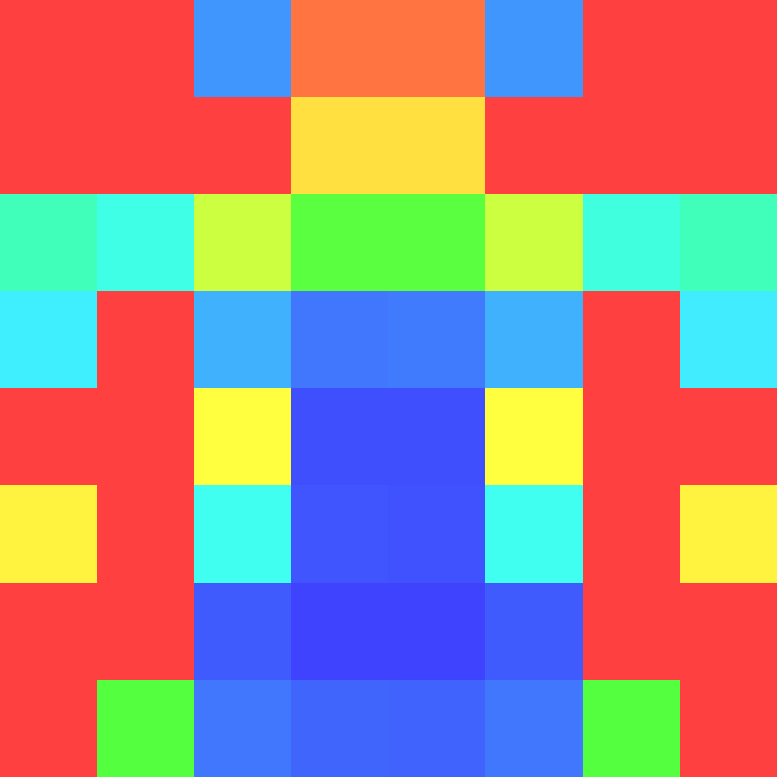}
\hfill
\includegraphics[width=.19\linewidth]{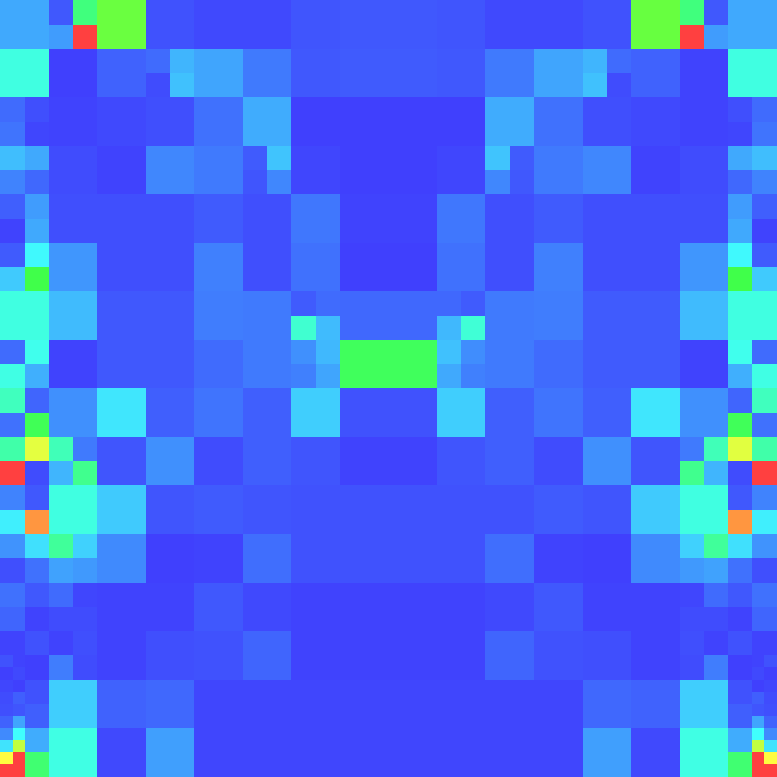}
\hfill
\includegraphics[width=.19\linewidth]{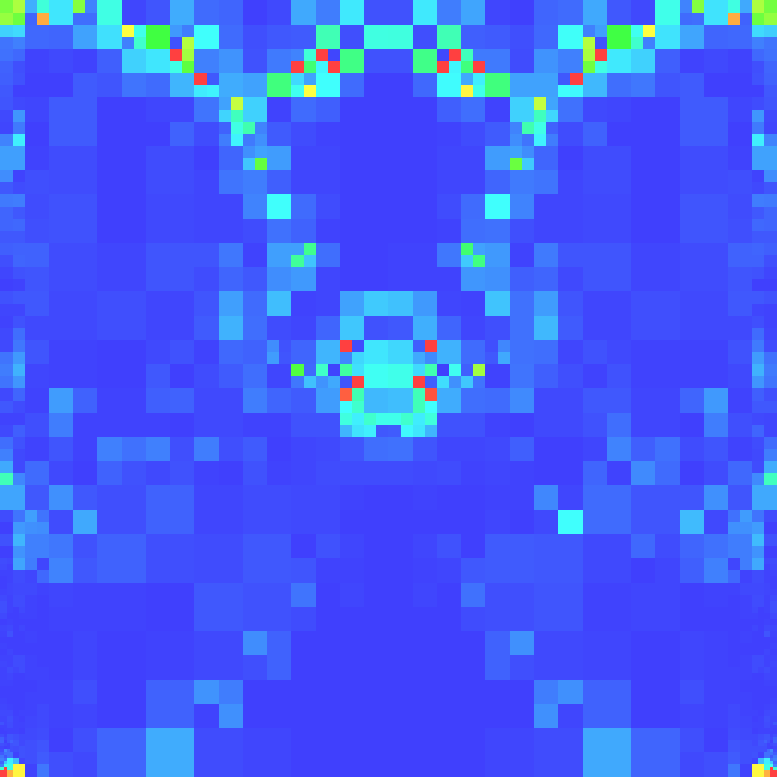}
\hfill
\includegraphics[width=.19\linewidth]{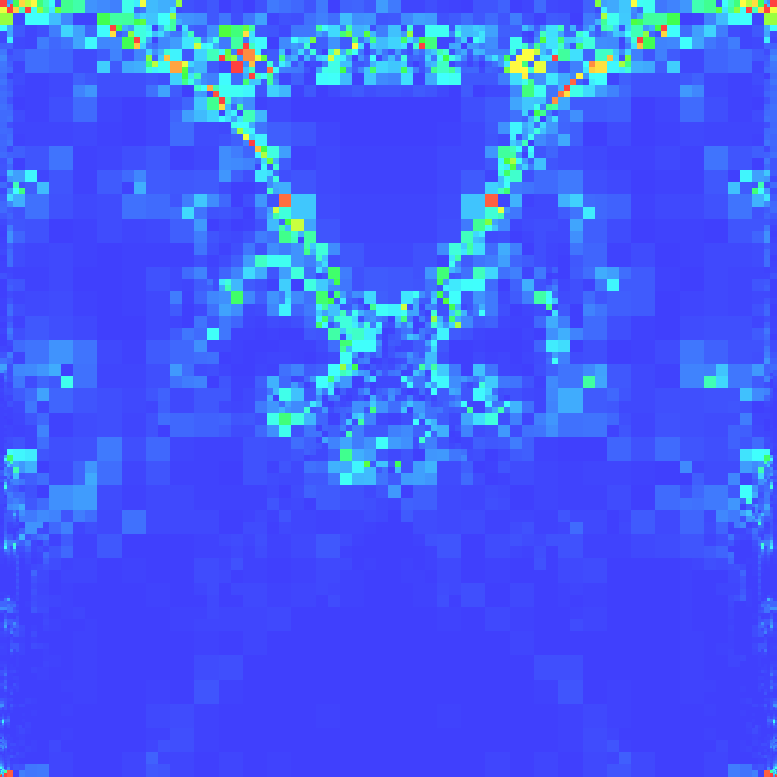}
\hfill
\includegraphics[width=.19\linewidth]{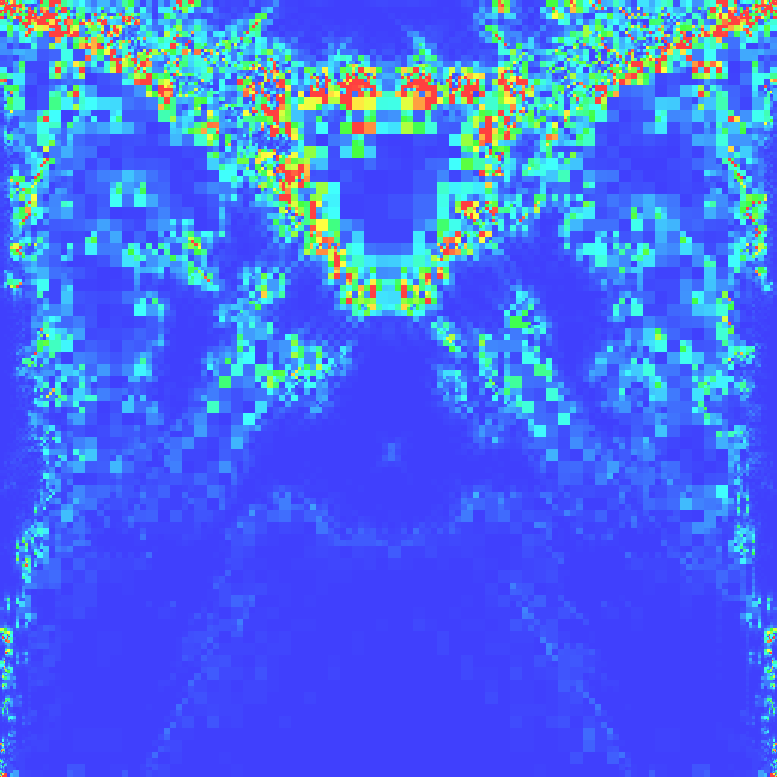} \\

\caption[]{For the carrier plate scenario we depict from top to bottom: the adaptively refined grid  after $0,4,7,10,13$ refinement steps;
a visualization of the optimized microstructure of the two-scale model using an iconic representation with the periodically extended perforations in white and the truss geometry on the fundamental cell in black on a uniformly gray background;
the volume density on the elements of the macroscopic mesh;
the von Mises stress color coded with \raisebox{1pt}{\includegraphics[height=4pt]{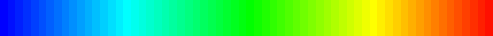}} for values in  $[0,6]$,
and the element wise error indicator $\tilde \eta_E^{\strain} + \tilde \eta_{\partial E}^{\strain}$
with the same color coding now for values in $[0,0.4]$.}
\label{fig:res_carrier}
\end{figure}

\begin{table}[!ht]
\begin{center}
\begin{tabular}{l | l l l | l | l l }
 \#Refs & Edge  $\tilde \eta_{\partial E}^{\strain}$ & Volume $\tilde \eta_E^{\strain}$ & Model $\fr12 \tilde \eta_E^{\etensor}$
        & Total $\tilde \eta_E(\sstrain_h,\stensor_h)$ & $J[\stensor,\sstrain]$ & \#El \\ \hline
 0      &  0.216758 & 0.371898 & 0.726425 & 1.315083 & 2.163037 & 64   \\
 1      &  0.089962 & 0.109742 & 0.496442 & 0.696147 & 2.142808 & 100  \\
 2      &  0.080205 & 0.107282 & 0.410473 & 0.597961 & 2.026496 & 154  \\
 3      &  0.045338 & 0.057250 & 0.355282 & 0.457871 & 2.006264 & 262  \\
 4      &  0.032952 & 0.030399 & 0.290097 & 0.353449 & 1.992369 & 382  \\
 5      &  0.024614 & 0.021315 & 0.216723 & 0.262653 & 1.941411 & 598  \\
 6      &  0.021418 & 0.022156 & 0.188472 & 0.232046 & 1.932662 & 868  \\
 7      &  0.020352 & 0.020339 & 0.165938 & 0.206630 & 1.906596 & 1318 \\
 8      &  0.019183 & 0.018444 & 0.140885 & 0.178513 & 1.901717 & 2077 \\
 9      &  0.020706 & 0.018520 & 0.128235 & 0.167461 & 1.890917 & 3241 \\
10      &  0.022799 & 0.019582 & 0.107348 & 0.149730 & 1.898300 & 5002 \\
11      &  0.026824 & 0.021944 & 0.096681 & 0.145451 & 1.901327 & 7576 \\
12      &  0.031593 & 0.025283 & 0.100130 & 0.157007 & 1.914866 & 11365\\
13      &  0.052313 & 0.041381 & 0.129384 & 0.223079 & 1.933688 & 16903\\
14      &  0.079236 & 0.061311 & 0.151317 & 0.291865 & 1.952025 & 25558
\end{tabular}
\end{center}
\caption{Components of the error indicator for the carrier plate scenario and each refinement step.
The optimal compliance cost based on extrapolation of values obtained from a series of computations for the sequential lamination model on uniform grids is $1.83992$.
At steps 7 to 11 the value for the two-scale model differs from the optimal one by $2$ to $3\%$.}
\label{fig:res_carrier_error}
\end{table}

\begin{table}[!ht]
\begin{center}
\begin{tabular}{l | l l l | l | l l }
 \#Refs & Edge  $\tilde \eta_{\partial E}^{\strain}$ & Volume $\tilde \eta_E^{\strain}$ & Model $\fr12 \tilde \eta_E^{\etensor}$
        & Total $\tilde \eta_E(\sstrain_h,\stensor_h)$ & $J[\stensor,\sstrain]$ & \#El \\ \hline
 0      &  0.216758   & 0.371898 & 0.726425 & 1.315083 & 2.163037 & 64\\
 1      &  0.117318   & 0.131675 & 0.469026 & 0.718020 & 1.992764 & 256\\
 2      &  0.026711   & 0.028785 & 0.374861 & 0.430358 & 1.922647 & 1024\\
 3      &  0.010379   & 0.009895 & 0.207756 & 0.228032 & 1.903045 & 4096 \\
 4      &  0.004076   & 0.004095 & 0.220584 & 0.228756 & 1.945240 & 16384
\end{tabular}
\end{center}
\caption{Components of the error indicator for the carrier plate scenario using uniform refinement.}
\label{fig:res_carrier_erroruni}
\end{table}

\begin{figure}[!ht]
\hspace*{.19\linewidth}
\hfill
\includegraphics[width=.19\linewidth]{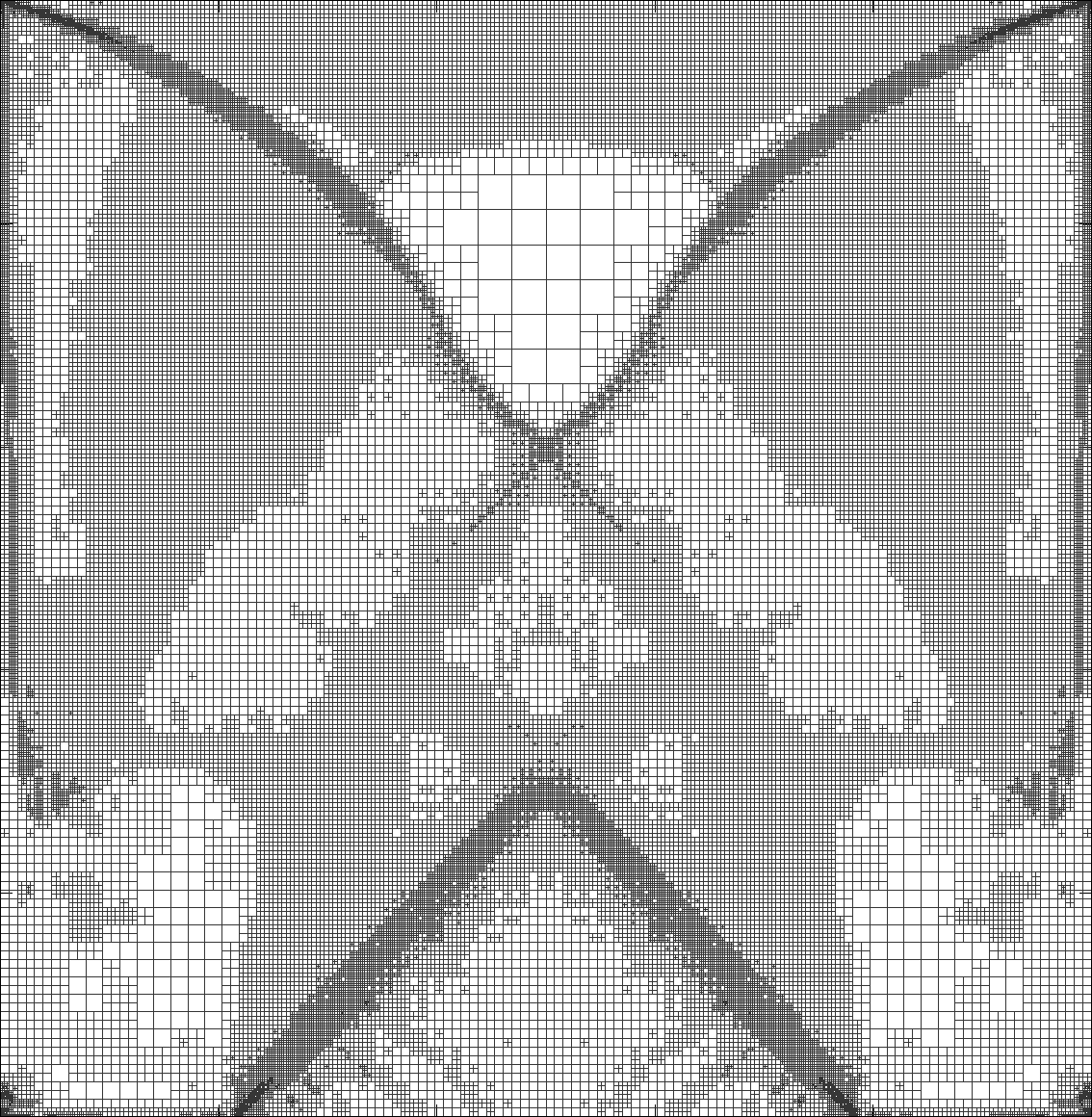}
\hfill
\includegraphics[width=.19\linewidth]{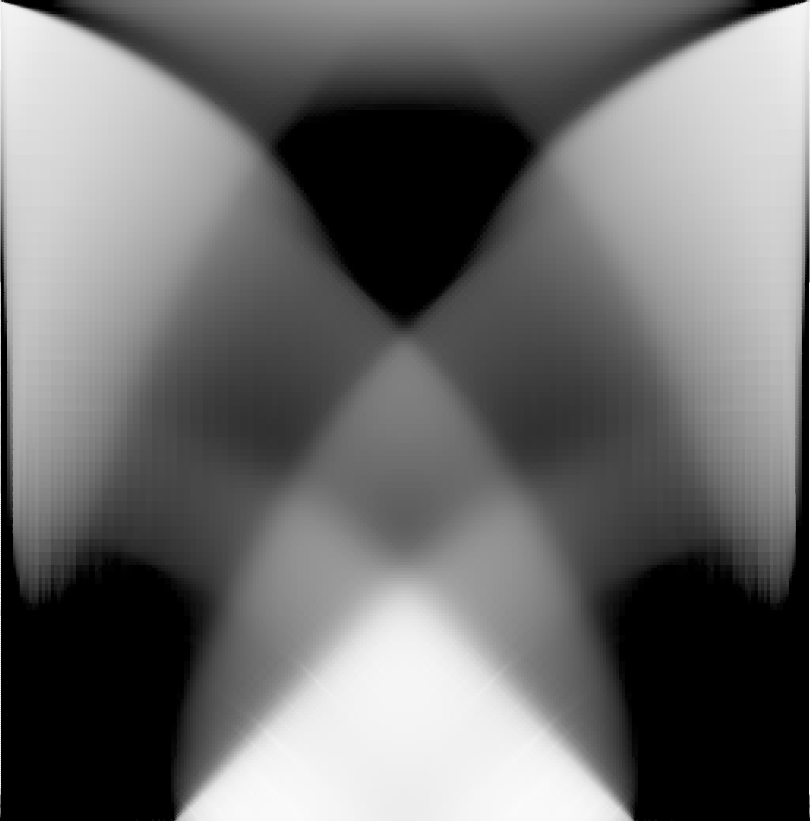}
\hfill
\includegraphics[width=.19\linewidth]{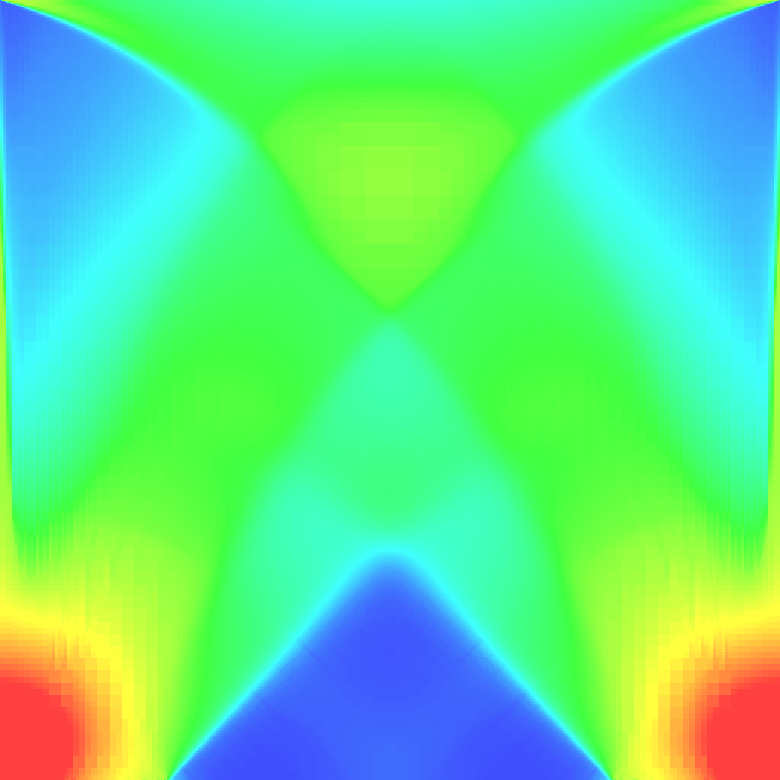}
\hfill
\hspace*{.19\linewidth}
\caption[]{For comparison grid, density, and von Mises stress with color coding as in Figure~\ref{fig:res_carrier} for values in $[0,6]$
for the sequential lamination model after 24 refinement steps, based on the approach described in \cite{GeRu15}.}
\label{fig:res_carrier_laminates}
\end{figure}

%
%

\begin{figure}[!ht]
\includegraphics[width=.32\linewidth]{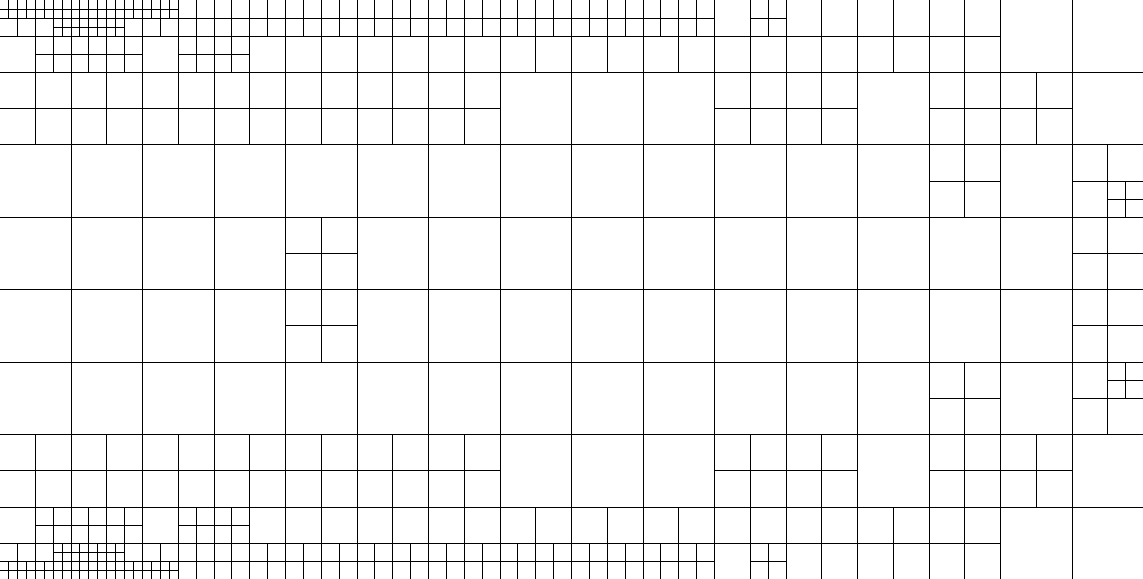}
\hfill
\includegraphics[width=.32\linewidth]{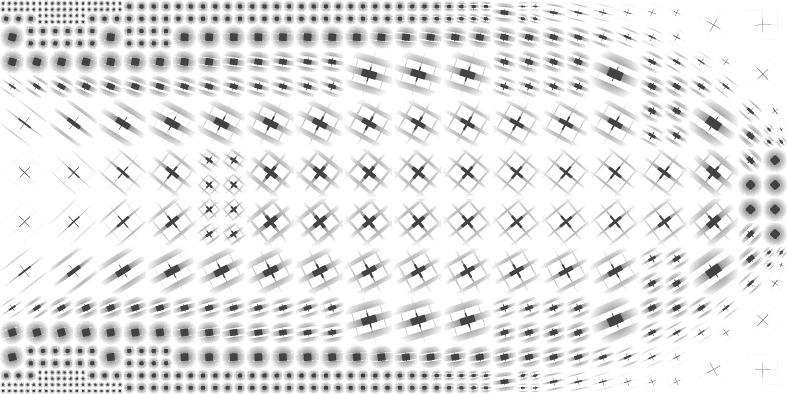}
\hfill
\includegraphics[width=.32\linewidth]{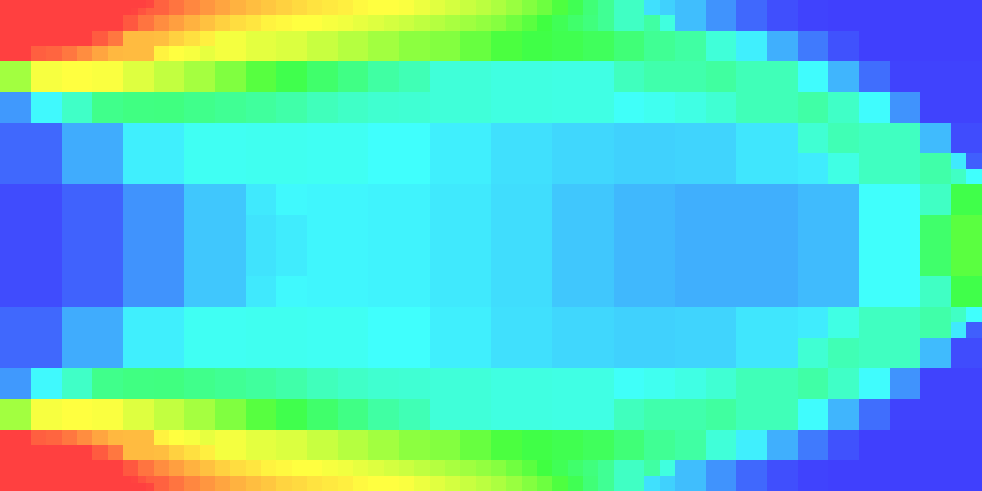} \\ \smallskip

\includegraphics[width=.32\linewidth]{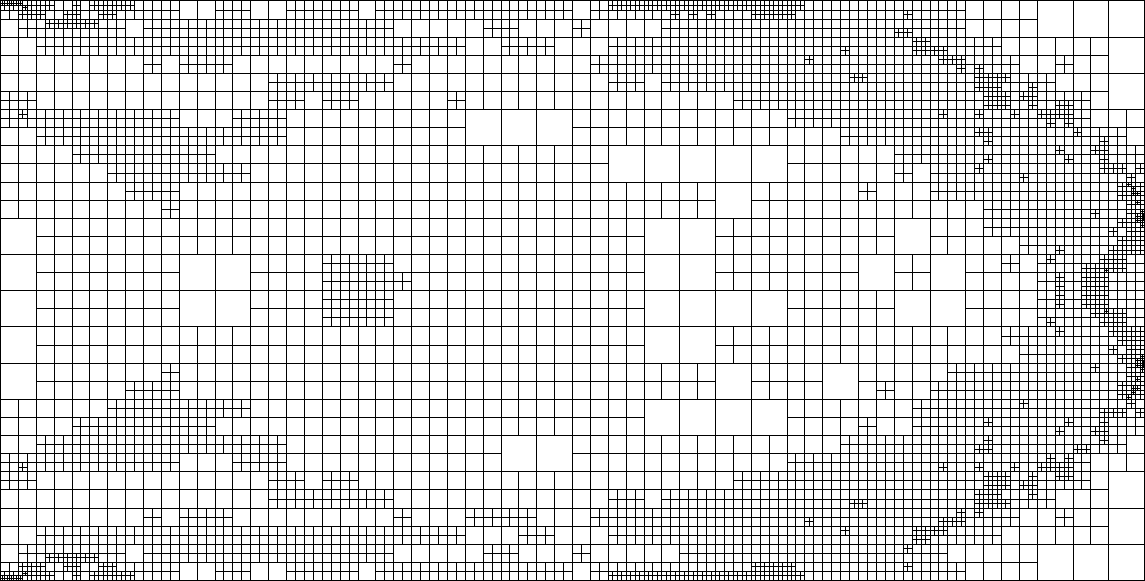}
\hfill
\includegraphics[width=.32\linewidth]{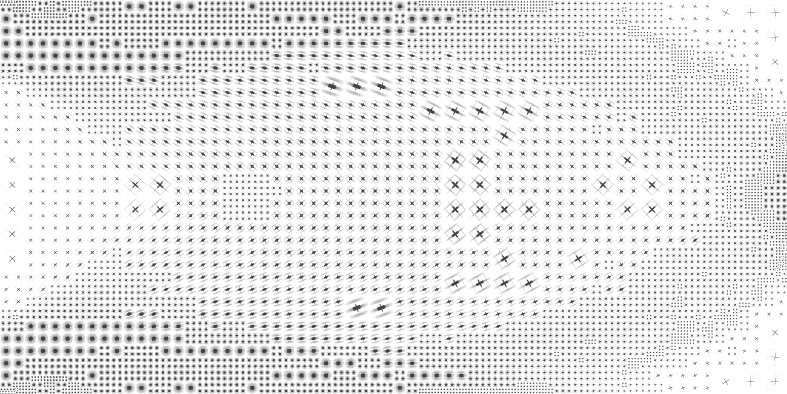}
\hfill
\includegraphics[width=.32\linewidth]{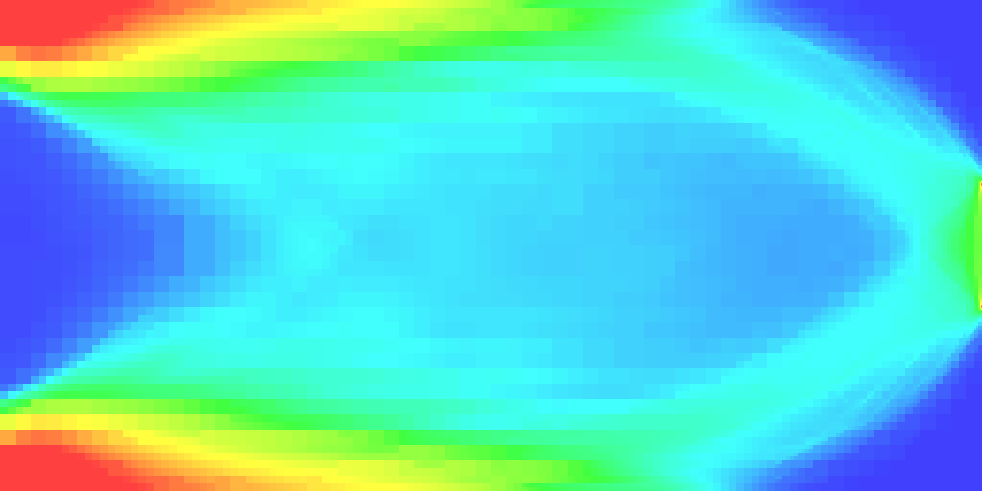}
\caption[]{For the cantilever scenario after 4 / 9 refinement steps: adaptively refined grid; visualization of the two-scale model as in Figure~\ref{fig:res_carrier};
von Mises stress with color coding as in Figure~\ref{fig:res_carrier} for values in $[0,3]$.}
\label{fig:res_cantilever}
\end{figure}

\begin{table}[!ht]
\begin{center}
\begin{tabular}{r | c@{\,}c@{\,}c | c@{\,}c@{\,}c | c@{\,}c@{\,}c }
        & \multicolumn{3}{|c}{\textbf{Cantilever}} & \multicolumn{3}{|c}{\textbf{Bridge}} & \multicolumn{3}{|c}{\textbf{L-Shape}} \\ \hline
 \#Refs &$\tilde \eta_E^{\strain} + \tilde \eta_{\partial E}^{\strain}$
        &$\fr12 \tilde \eta_E^{\etensor}$ & $J[\stensor,\sstrain]$
        &$\tilde \eta_E^{\strain} + \tilde \eta_{\partial E}^{\strain}$
        &$\fr12 \tilde \eta_E^{\etensor}$ & $J[\stensor,\sstrain]$
        &$\tilde \eta_E^{\strain} + \tilde \eta_{\partial E}^{\strain}$
        &$\fr12 \tilde \eta_E^{\etensor}$ & $J[\stensor,\sstrain]$ \\ \hline
 0      & 0.0243 & 0.0706 & 0.2416 &  0.7390 & 0.3552 & 0.9239 & 0.0668 & 0.0729 & 0.2848 \\
 1      & 0.0114 & 0.0503 & 0.2083 &  0.3058 & 0.3830 & 0.8466 & 0.0332 & 0.0508 & 0.2459 \\
 2      & 0.0068 & 0.0422 & 0.1924 &  0.2132 & 0.2909 & 0.7931 & 0.0206 & 0.0359 & 0.2270 \\
 3      & 0.0044 & 0.0324 & 0.1913 &  0.1011 & 0.2421 & 0.7632 & 0.0126 & 0.0322 & 0.2213 \\
 4      & 0.0049 & 0.0303 & 0.1889 &  0.0519 & 0.2139 & 0.7436 & 0.0088 & 0.0264 & 0.2198 \\
 5      & 0.0053 & 0.0250 & 0.1807 &  0.0413 & 0.2209 & 0.7355 & 0.0053 & 0.0239 & 0.2190 \\
 6      & 0.0050 & 0.0220 & 0.1772 &  0.0436 & 0.2620 & 0.7354 & 0.0088 & 0.0213 & 0.2147 \\
 7      & 0.0067 & 0.0182 & 0.1761 &  0.3006 & 0.4511 & 0.8585 & 0.0108 & 0.0221 & 0.2113 \\
 8      & 0.0111 & 0.0218 & 0.1776 &  0.0804 & 0.3054 & 0.7570 & 0.0185 & 0.0291 & 0.2123 \\
 9      & 0.0164 & 0.0242 & 0.1794 &  0.1393 & 0.3207 & 0.7785 & 0.0222 & 0.0310 & 0.2121
\end{tabular}
\end{center}
\caption{Components of the error indicator for each refinement step for the cantilever, the bridge, and the L-shaped domain.
}
\label{fig:res_other_error}
\end{table}

%
%

\begin{figure}[!ht]
\includegraphics[width=.32\linewidth]{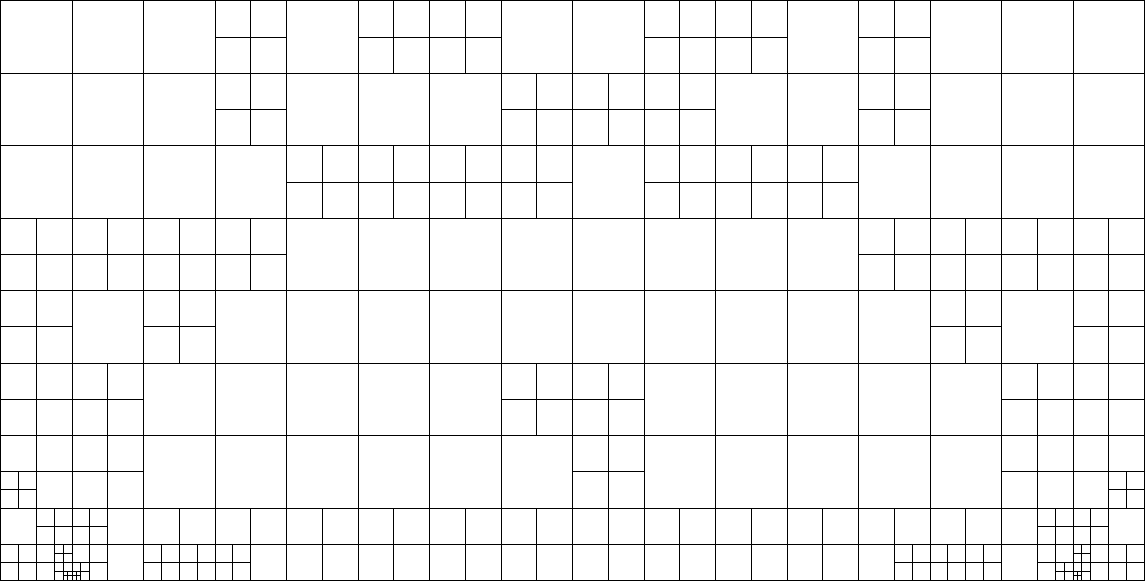}
\hfill
\includegraphics[width=.32\linewidth]{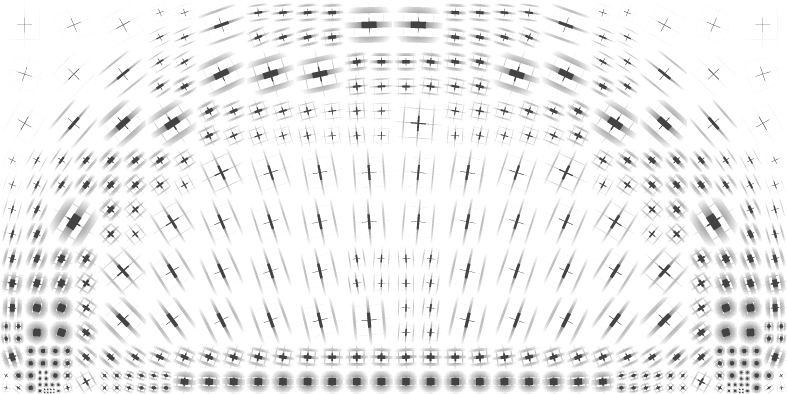}
\hfill
\includegraphics[width=.32\linewidth]{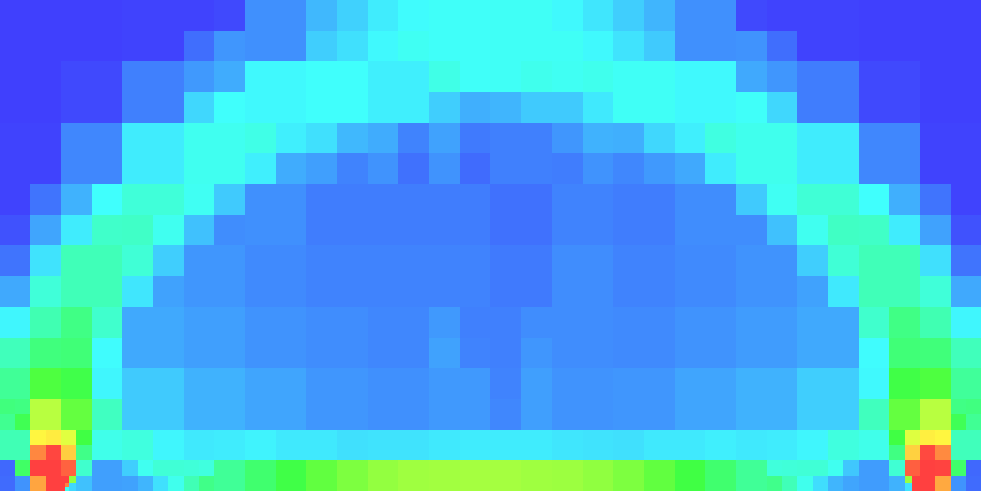} \\ \smallskip

\includegraphics[width=.32\linewidth]{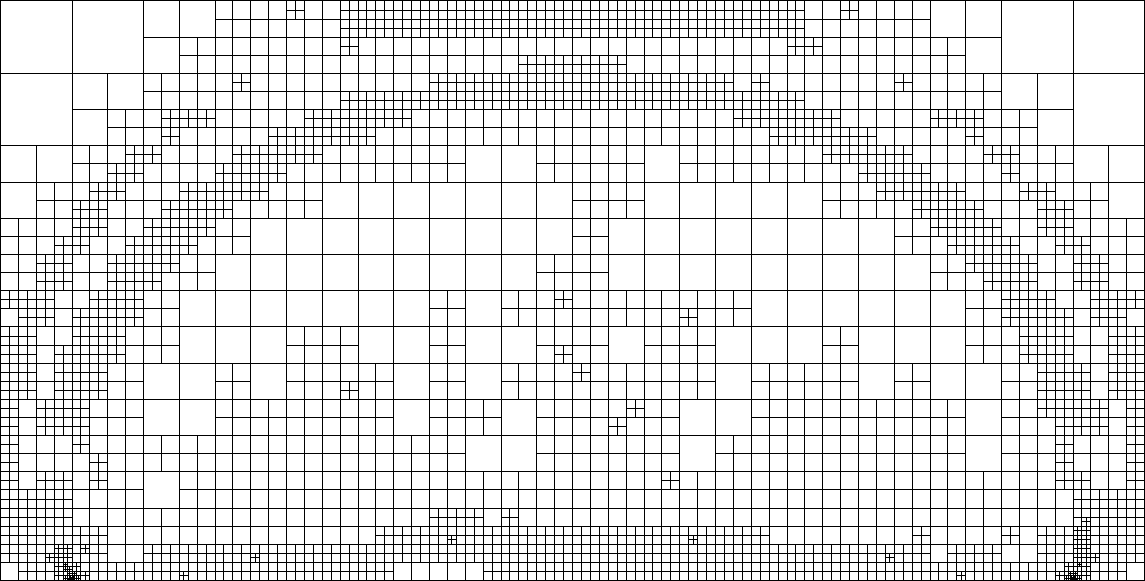}
\hfill
\includegraphics[width=.32\linewidth]{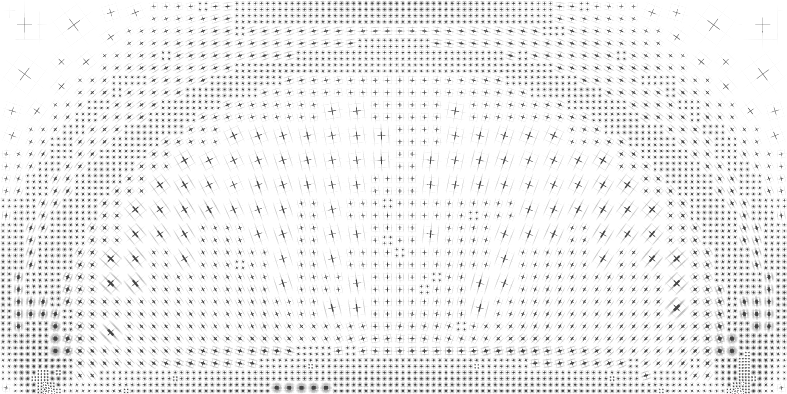}
\hfill
\includegraphics[width=.32\linewidth]{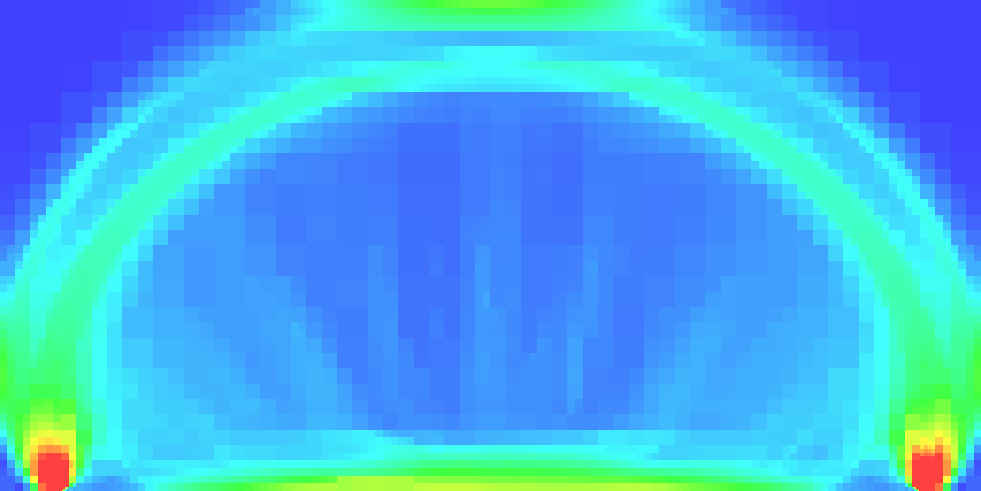}
\caption[]{For the bridge scenario after 4 / 9 refinement steps: adaptively refined grid; visualization of the two-scale model as in Figure~\ref{fig:res_carrier};
von Mises stress with color coding as in Figure~\ref{fig:res_carrier} for values in $[0,8]$.
}
\label{fig:res_bridge}
\end{figure}

%
%

\begin{figure}[!ht]
\includegraphics[width=.32\linewidth]{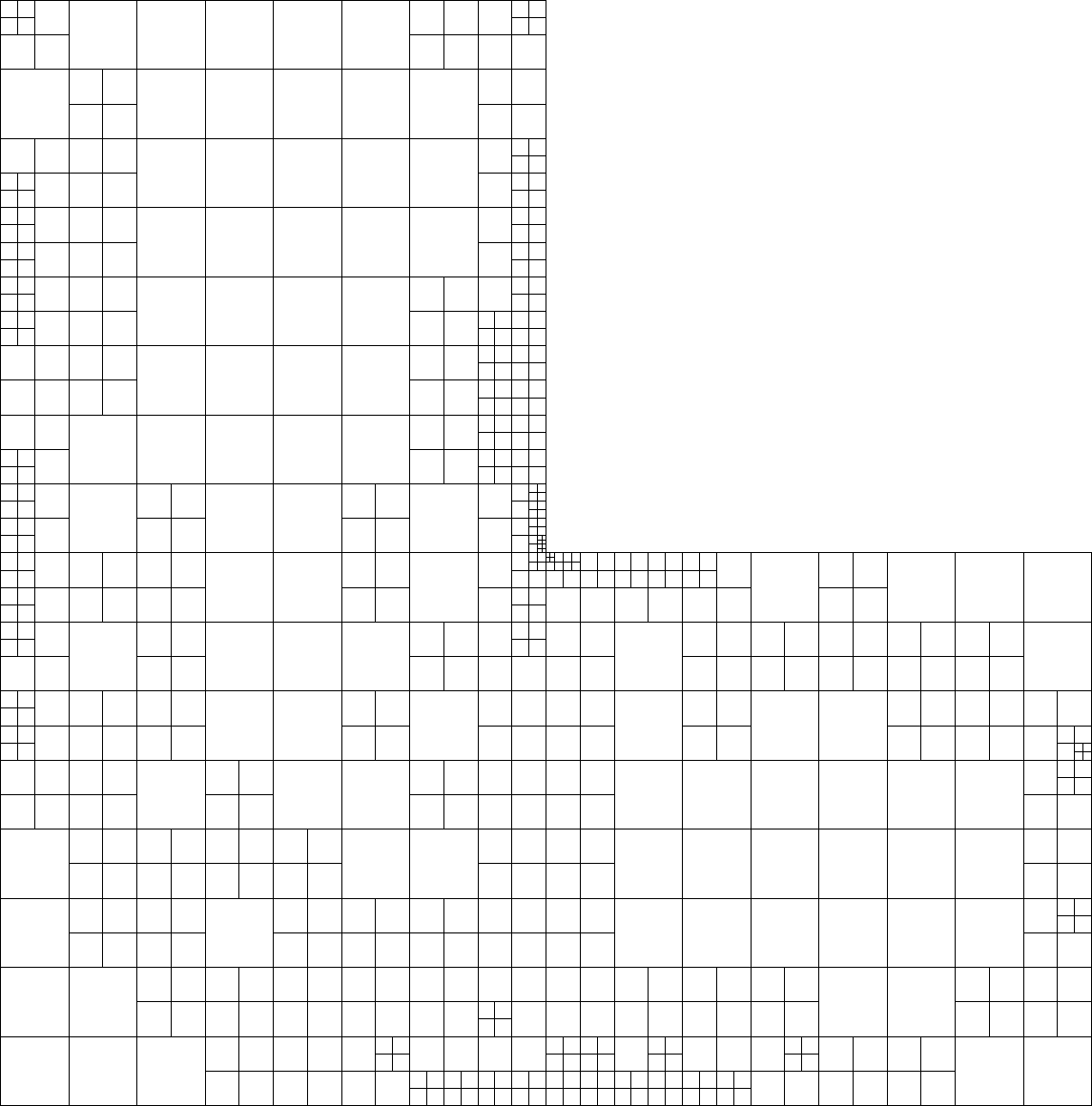}
\hfill
\includegraphics[width=.32\linewidth]{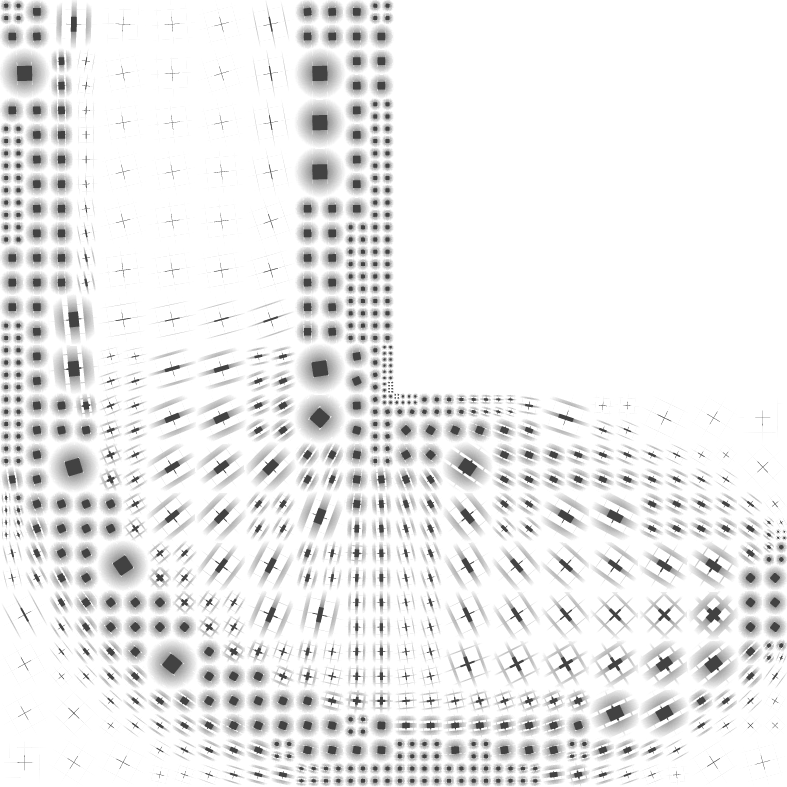}
\hfill
\includegraphics[width=.32\linewidth]{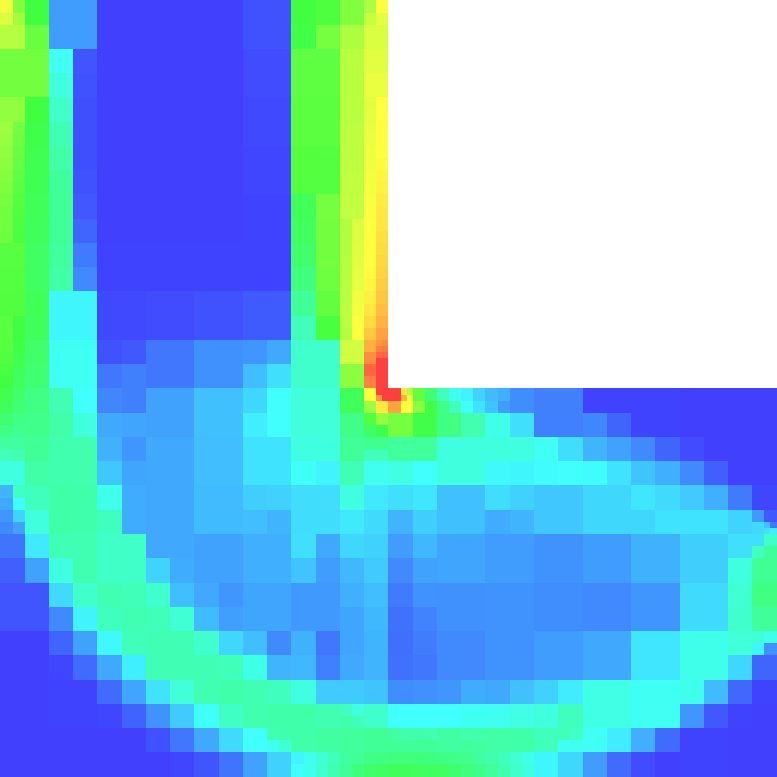} \\ \smallskip

\includegraphics[width=.32\linewidth]{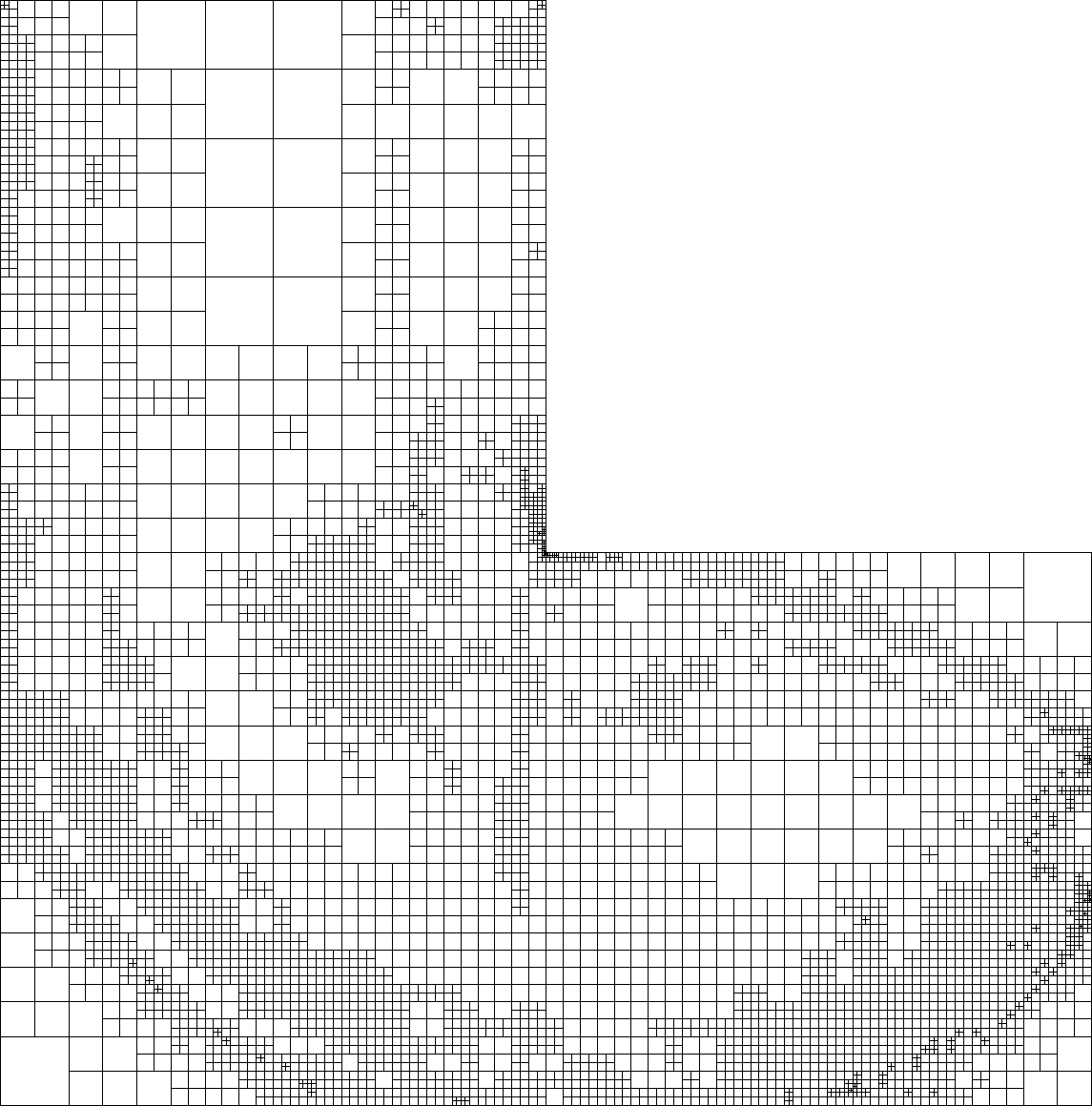}
\hfill
\includegraphics[width=.32\linewidth]{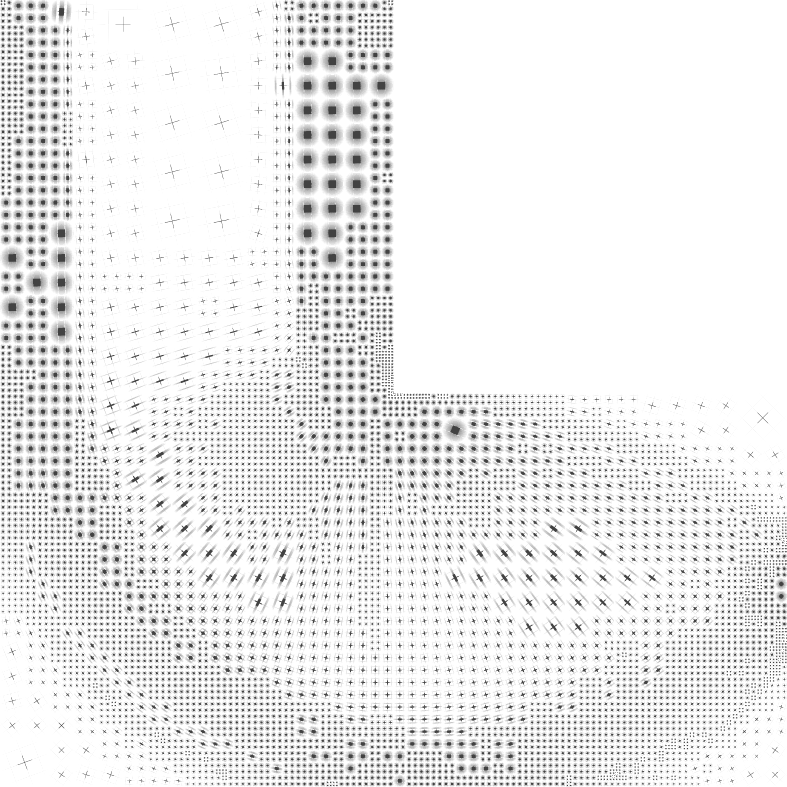}
\hfill
\includegraphics[width=.32\linewidth]{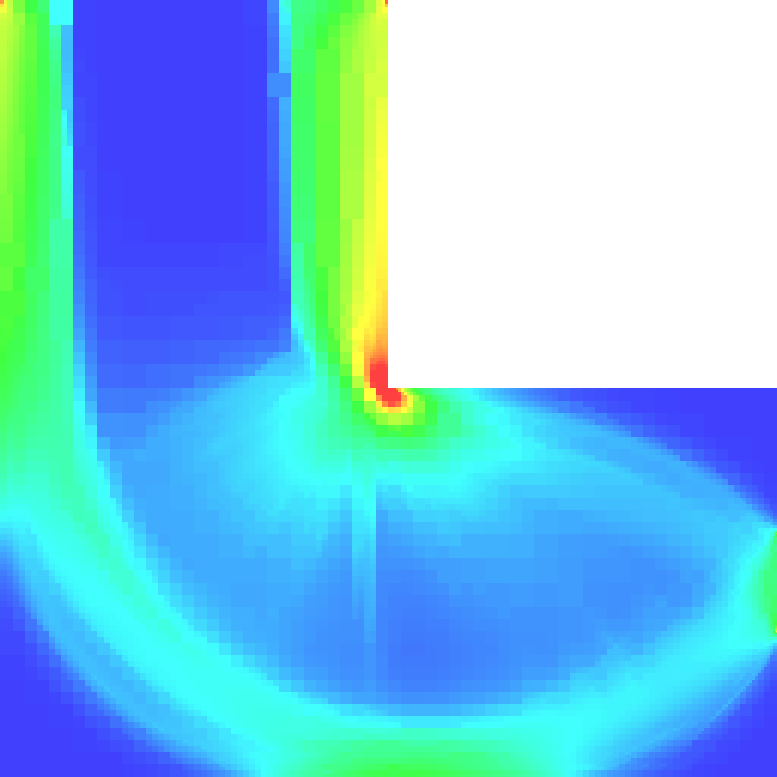}
\caption[]{For the L-shaped domain after 4 / 9 refinement steps: adaptively refined grid; visualization of the two-scale model as in Figure~\ref{fig:res_carrier};
von Mises stress with color coding as in Figure~\ref{fig:res_carrier} for values in $[0,4]$.
}
\label{fig:res_ldomain}
\end{figure}

\section{Conclusion} \label{sec:sum}
We presented a two-scale approach to shape optimization based on simple
and physically realizable microstructures, consisting of two orthogonal trusses
whose width and orientation are optimized. We gave a posteriori bounds of the resulting
discretization and modeling error. We presented a numerical implementation of the scheme,
based on the boundary element method at the microscale and finite elements at the macroscale,
which uses the estimated error to guide grid adaptivity, and employed
it to study several standard model problems in shape optimization.

Our results illustrate good convergence of the objective functional and of the
general structure of the shape. However, after an initial decrease, the error
indicator increases when the grid becomes finer and finer. This is attributed
to the emergence of oscillations in the microscopic parameters at the grid scale
and should be considered as an indication of the limits of the approximate microscopic model
used in the algorithm. The proposed error indicators robustly reports this turning point where the algorithm could be stopped.
Hence, the proposed algorithm allows to identify an adaptive mesh on which the physical realization of a close to optimal
microstructure is mechanically easier and possibly more realistic, and helps to avoid overrefinement.

Furthermore, let us emphasize that locally on each cell of the macroscopic finite element mesh the periodic truss microstructure is  mechanically constructable.
Blending between the microstructures of adjacent cells will have an impact on the cost of the overall construction, as discussed by Allaire and Kohn \cite{AlKo93}
and Bends{\o}e \etal\  \cite{BeDi93}.
In that respect, it is advisable to start from the two scale formulation and use it to generate a good initial configuration for a single scale optimization,
which will then result in a geometry which can actually be manufactured. The presented weighted residual approach for a posteriori error estimates would then translate also to this single scale
model including the balance between model error reduction and potential increase of the discretization error discussed above.

\section*{Acknowledgments}
This work has been supported by the \emph{Deutsche Forschungsgemeinschaft} through
\emph{Collaborative Research Centre 1060 -- The Mathematics of Emergent Effects}.

\bibliographystyle{plain-noname}
\bibliography{bibtex/all,bibtex/own,bibtex/library,local,neuerefs}

\end{document}